\preto\subequations{\ifhmode\unskip\fi}
\theoremstyle{definition}
\newtheorem{theorem}{Theorem}
\newtheorem{lemma}{Lemma}
\newtheorem{proposition}{Proposition}
\title{Resource Distribution Under Spatiotemporal Uncertainty of Disease Spread: Stochastic versus Robust Approaches}
\author{Beste Basciftci\thanks{Corresponding author; Department of Business Analytics, Tippie College of Business, University of Iowa, Email: {\tt beste-basciftci@uiowa.edu};}~~~Xian Yu\thanks{Department of Industrial and Operations Engineering, University of Michigan at Ann Arbor, Email: {\tt yuxian@umich.edu};}~~~Siqian Shen\thanks{Department of Industrial and Operations Engineering, University of Michigan at Ann Arbor, Email: {\tt siqian@umich.edu}.}}
\date{}
\begin{document}
\graphicspath{{figures/}}

\maketitle

\begin{abstract}
We consider the problem of optimizing locations of distribution centers (DCs) and plans for distributing resources such as test kits and vaccines, under spatiotemporal uncertainties of disease spread and demand for the resources. We aim to balance the operational cost (including costs of deploying facilities, shipping, and storage) and quality of service (reflected by demand coverage), while ensuring equity and fairness of resource distribution across multiple populations. We compare a sample-based stochastic programming (SP) approach with a distributionally robust optimization (DRO) approach using a moment-based ambiguity set. Numerical studies are conducted on instances of distributing COVID-19 vaccines in the United States and test kits, to compare SP and DRO models with a deterministic formulation using estimated demand and with the current resource distribution plans implemented in the US. We demonstrate the results over distinct phases of the pandemic to estimate the cost and speed of resource distribution depending on scale and coverage, and show the ``demand-driven'' properties of the SP and DRO solutions. Our results further indicate that if the worst-case unmet demand is prioritized, then the DRO approach is preferred despite of its higher overall cost. Nevertheless, the SP approach can provide an intermediate plan under budgetary restrictions without significant compromises in demand coverage.
\end{abstract}

~\\
{\bf Keywords:} COVID-19 pandemic; vaccine distribution; 
resource allocation; stochastic integer programming; distributionally robust optimization; multi-objective optimization

\section{Introduction}
\label{sec:intro}

With the rapid spread of the coronavirus disease 2019 (COVID-19), testing is central to planning response activities in all countries during and post the pandemic. Effective and efficient testing can lead to early outbreak detection, to quickly isolate and treat infected patients, guide people consciously performing social distancing, and also lock down certain areas/activities if needed. Establishing efficient testing systems involves distributing test kits to test centers while considering potential demand uncertainty for testing \citep[see, e.g.,][]{Lampariello2020TestDistribution, Santini2020TestAllocation}. A similar resource distribution problem arises in the production and distribution of COVID-19 vaccines that became available in December 2020 -- That is, upon needs and orders from different regions, a central government sends limited amounts of vaccines to local agents using the information of regional infection status. This problem was solved in the United States (US) during the first two quarters of Year 2021 for distributing COVID-19 vaccines and is relevant to many countries when vaccines become available globally, or for distributing resources to respond to future outbreaks of other diseases. 

In the US, policymakers seek efficient ways to distribute vaccines to states and jurisdictions, and then to local hospitals, clinics, pharmacies, schools, communities and so on. Given certain vaccine allocation policies, how to distribute vaccines from production sites or distribution centers (DCs) to downstream demand is challenging and emergent. Meanwhile, it is hard to quantify the cost and speed trade-off for distributing medical resources such as vaccines or test kits, as there exist hidden costs associated with unsatisfied demand that may come from high-risk population groups. Given the evolving pandemic and rapidly changing demand, it is also challenging to locate DCs and make shipping plans optimally to achieve the best trade-off. Moreover, the scale of the problem can be enormous, involving millions of vaccines (or test kits) and large-scale distributed demand for operating the system for the whole country. A scientific way to conduct vaccine and test-kit distribution needs to rely on mathematical models and algorithms for processing and utilizing information from large datasets about the COVID-19 infection trends and demand for testing/vaccination. 

In the supply chain literature, there has been a large body of research studies on how to site retail stores, optimize inventory and production, manage stock levels as demand for products fluctuates by season \citep{daskin2011network, shen2011reliable, snyder_2006, basciftci2020distributionally}. Among them, many focus on facility locations under uncertain demand, using stochastic or robust optimization approaches. Establishing a COVID-19 resource distribution system presents a set of similar challenges under various uncertainties, but in addition, it requires to incorporate  more decisions such as resource distribution, inventory control, and demand shortage control into the multi-period facility planning problem. 
In this work, we present a mathematical framework that encapsulates allocation and distribution stages of disease-control resources (e.g., vaccines and test kits) by considering spatiotemporal uncertainty in the demand and ambiguity of the probability distribution of demand over a multi-period horizon. We optimize the locations of DCs, their capacities, shipment amounts and inventory levels. As the provided framework is presented in a generic form, it aims to address various resource allocation problems during different phases of a pandemic by comparing  deterministic, stochastic and distributionally robust decision-making approaches. 

The main contributions of the paper are threefold. First, we combine mathematical programming and statistical learning, for optimally locating DCs for disease-control resources and deriving shipping plans, customized for counties and states with diverse demographics and disease spread patterns. Our research can be utilized at the national level to balance the distribution, or at the state or county level to facilitate local operations. Second, through data-informed location optimization, we can effectively identify the most critical and vulnerable groups to prioritize testing or receiving vaccines under constrained amounts of resources and as a result, can protect other population groups. Third, we conduct extensive numerical studies using real COVID-19 infection data in the US and in the State of Michigan, to compare different approaches in terms of the operational cost and speed of resource distribution, by testing a diverse set of instances having different scales and parameter settings. 

The rest of the paper is organized as follows. In Section \ref{sec:litReview}, we review the most relevant literature in disease control, facility location, and vaccine supply chain management. 
In Section \ref{sec:TwoStageStochasticModel}, we present the stochastic programming model based on a set of demand samples generated from a given probability distribution and in Section \ref{sec:dr-model}, we consider that the exact demand distribution is unknown and present the distributionally robust optimization model to optimize decisions against the distributional ambiguity. In Section \ref{sec:computation}, we conduct numerical studies using COVID-19 infection data to demonstrate results of distributing vaccines and test kits, and compare our solutions with what were implemented by the Centers for Disease Control and Prevention (CDC). In Section \ref{sec:concl}, we conclude the paper and present future research directions. 

\section{Literature Review}
\label{sec:litReview}

Resource distribution is of vital importance in many applications, in particular the ones related to disease control and disaster relief, as the resources are usually scarce \citep{Cao2012medicalResourceAlloc, Gupta2016DisasterReview}. 
For COVID-19, personal protective equipment (PPE), test kits, hospital beds and ventilators, and, most recently, vaccines are among the resources that need to be effectively distributed at all levels \citep{Emanuel2020Allocation}.  \citet{bertsimas2020predictions} formulate a deterministic optimization model to improve ventilator allocation by allowing sharing of ventilators between hospitals from different states in the US. \citet{Billingham2020VentilatorAlloc} study a similar problem to optimize ventilator sharing. 
\citet{Lampariello2020TestDistribution} consider a single-period COVID-19 test-kit allocation problem by maximizing utility functions corresponding to disease detection capabilities in different regions, to determine the amount of test kits to be allocated in each region given a certain budget. 
\citet{Santini2020TestAllocation} considers the distribution of swabs and reagent to laboratories for maximizing the number of COVID-19 tests processed. The author formulates the problem as a deterministic integer programming model by considering sharing of swabs and reagent among different laboratories over a multi-period planing horizon. Although most of the existing literature consider different stages of resource planning and sharing during the pandemic, they use regression tools to forecast average demand, without explicitly modeling the spatiotemporal infection and demand uncertainties at the resource-planning phase.

Assuming demand being stochastic, \citet{Mehrotra2020pandemic,Blanco2020ResourceAlloc} propose stochastic programming approaches to address this issue by generating scenarios with different patterns to represent the randomness in the amount of patients due to uncertain disease spread, to re-allocate and share medical resources among different hospitals.
\citet{Yin2021_Ventilator} extends this problem to a risk-averse multi-stage stochastic programming setting by incorporating changing transmission dynamics. We note that these studies consider the resource allocation and redistribution between certain regions with existing facilities, and facility location decisions with their corresponding capacities have not been incorporated to the decision making process. 
Recently, \citet{Parker2020ResourceAlloc} use robust optimization by assuming an unknown number of patients for each specific day within a certain range from a nominal value, e.g., the average across a certain period. 
In addition to the studied settings, the probability distribution of the uncertain demand for resources may be ambiguous during emergency or disaster relief operations due to the inherent and abrupt nature of these events \citep[see, e.g.,][]{Liu2019DRO,Wang2020DRO}. This motivates the development of a distributionally robust optimization approach in our paper for robustly allocating resources during an unprecedented event such as the COVID-19 pandemic. To the best of our knowledge, this paper is the first work that leverages distributionally robust optimization for COVID-19 related medical resource distribution and facility location decision-making problems.

The supply chain operations of vaccines for infectious disease control involve vaccine production, allocation and distribution stages, with decisions on which vaccines to produce, how many doses to produce, who should be vaccinated, how the vaccines can be distributed, and so on  \citep[see][]{duijzer2018literature}. Equitable and timely allocation of vaccines to different population groups becomes necessary in order to eliminate infectious diseases worldwide \citep{Tebbens2009Vaccine}.  In that regard, \citet{Morton2017vaccine} provide a retrospective study over 2009 H1N1 pandemic by developing  deterministic optimization models to ensure fair and equitable allocation of vaccines through first determining coverage levels of each region and then finding an allocation plan within certain tolerance from the targeted  levels.  

\citet{golan2020vaccine} conduct a comprehensive literature review that focuses on the resilience of vaccine supply chains, and point out that the lack of network-based, modeling-based, quantitative analysis is a major gap that needs to be bridged in order to create methods of real-time analysis and decision tools for the COVID-19 vaccine supply chain. For COVID-19 specifically, \citet{bubar2020model} discuss strategies for vaccine prioritization among different population groups; \citet{babus2020optimal} estimate occupation-based infection risks and use age-based infection fatality rates in a model to assign priorities over populations with different occupations and ages; \citet{bertsimas2020optimizing} capture vaccine effects and the variability in mortality rates across sub-populations, and then integrate a predictive model into a prescriptive model to optimize vaccine allocation; \citet{simchiLevi2020vaccine} focus on how priority and population groups should be identified over time under limited supply. Nevertheless, as the majority of the literature is focusing on COVID-19 vaccine allocation, few studies develop mathematical approaches for solving the operational problems related to vaccine distribution. Among them, \citet{bertsimas2021locate} optimize vaccine site selection and the assignment of population to different sites while ensuring optimal subsequent vaccine allocation. They show that the proposed solution achieves critical fairness objectives and is also highly robust to uncertainties and forecasting errors. Different from most of the existing COVID-19 vaccine allocation studies, in this paper, we study a broader class of vaccine-related operational problems involves locating facilities to produce or store vaccines and shipping them to demand locations in a daily basis with inventory and unmet demand (i.e., lost sales or backlogging) being considered.

This study is closely related to the literature of facility location under uncertainty, which uses different approaches depending on the available information about the uncertain demand \citep{snyder_2006}. 
When the distribution of the uncertain parameter is known or can be estimated accurately, the facility location problem can be modeled as a stochastic program by optimizing the expected total cost in its risk-neutral setting \citep{shen2011reliable, BidhandiYusuff2011_StochasticFLP, Tolooie2020__StochasticFLP}. In contrast to the generic stochastic facility location models, in certain problem settings as in this study, operational decisions may need to be integrated into this strategic level problem \cite{Schutz2009, Georgiadis2011}. This study extends this line of research by determining facility location and capacity decisions with inventory, shipment and unmet demand amounts within a two-stage stochastic programming framework, while providing extensions of this model to address more complex settings that can involve inventory at distribution centers, lead time of shipping and different types of distribution centers. 
On the other hand, when the distribution of the uncertain parameter is not fully known (i.e. there exists ambiguity in the distribution), the decision maker can be more conservative by optimizing the problem over the worst-case distribution, which can be modeled using distributionally robust optimization \citep{Lu2015ReliableFLP, basciftci2020distributionally}. Although these studies consider uncertain demand when planning facilities, an integrated decision framework for optimizing the facility location decisions together with multi-period capacity allocation, inventory control and resource distribution plans has not been studied under distributional ambiguity of the multivariate demand, due to the modeling and computational complexity. Thus, this paper also presents contributions to the facility location literature by addressing this complex setting with a distributionally robust optimization approach. 

\section{A Stochastic Programming Approach}
\label{sec:TwoStageStochasticModel}

In this paper, we consider the distribution of vaccines or test kits in a given region as a capacitated facility location problem involving multiple periods of shipment planning. A decision maker may need to solve the problem for either national- or state-level operations, to distribute manufactured vaccines and test kits to different states or counties from located DCs. Here, the DCs could also be the manufacturing sites, as manufactures can dispatch productions to states or to counties directly. In addition, the federal or state governments can act to open up new DCs to improve the efficiency of operations and increase demand coverage. 

Denote $\mathcal{I},\ \mathcal{J},\ \mathcal{T}$ as the sets of potential sites for locating DCs, demand locations and finite periods, respectively. Let $c^o_i$, $c^h_{i}$, $c^s_{ijt}$, $c^u_{jt}$, $c^{I}_{jt}$ be the cost of operating  DC $i$, unit cost of installing capacities in DC $i$, unit shipping cost from DC $i$ to demand site $j$ in period $t$, unit penalty cost of unsatisfied demand and unit cost of inventory at demand site $j$ in period $t$, for all $i\in\mathcal{I},\ j\in\mathcal{J},\ t\in\mathcal{T}$, respectively. (Varying cost parameter $c^u_{jt}$ can help ensure fair resource distribution to prioritized demand locations based on their demographics and infection status over time.) Denote $B_t$ as the total capacity of manufacturing resources across all DCs in period $t$ for all $t\in\mathcal{T}$, determined by the total amount of raw materials, space, workers, etc., needed for manufacturing the resources during each period. Let $\boldsymbol{d}$ be the vector of uncertain demand (i.e., the number of people who need to be vaccinated or tested) and $P$ be its probability distribution. We use the Monte Carlo sampling approach \citep{kleywegt2002sample} to replace $P$ with an empirical distribution constructed using  $|\Omega|$ scenarios with each scenario $\omega\in\Omega$ having an equal probability $p^{\omega}=1/|\Omega|$. We consider a finite set $\Omega$ of realizations of the random vector $\boldsymbol{d}$. Specifically, for each scenario $\omega\in\Omega$, we use $d_{jt}(\omega)$ to represent the demand realization at site $j$ in period $t$ for all $j\in\mathcal{J}$ and $t\in\mathcal{T}$, and therefore $\boldsymbol{d} = [d_{jt}(\omega), \ \omega \in \Omega, \ j \in \mathcal J, \ t \in \mathcal T]^{\mathsf T}$. Throughout this paper, bold letters are used for representing vectors and matrices. For notational convenience, we also use notation $[m]$ to indicate set $\{1,\ldots,m\}$. 

We define binary variables $x_i\in\{0,1\}, \ \forall i\in \mathcal I$ such that $x_i=1$ if DC $i$ is built, and $x_i = 0$ otherwise. For each built DC $i$ and period $t\in \mathcal{T}$, we also decide its capacity $h_{it}\ge 0$ for manufacturing or storing  resources. Both variables $\boldsymbol{x}$ and $\boldsymbol{h}$ are planning decisions and their values need as determined before realizing uncertain demand $\boldsymbol{d}$. For each scenario $\omega\in\Omega$, we define variables $s_{ijt}(\omega)\ge 0$ as the amount of resources sent from DC $i$ to demand location $j$ in period $t$, for all $i \in \mathcal I, \ j \in \mathcal J, \ t \in \mathcal T$. For each demand site $j \in \mathcal J$, we allow to keep inventory if the received resources are more than the total demand or to back order otherwise. In particular, we assume that people who cannot receive vaccines or test kits in the current period will wait to be administered in future periods. Accordingly, we define variables $I_{jt}(\omega)\ge 0$ and $u_{jt}(\omega)\ge 0$ as the inventory and backlog recourse variables for each $j\in \mathcal{J}$, $t\in\mathcal{T}$, and $\omega \in \Omega$. Moreover, input parameters $I_{j0}$ and $u_{j0}$ denote the initial inventory and backlog at demand location $j$ for all $j\in\mathcal{J}$, whose values are the same across all scenarios. 

We first employ a two-stage stochastic mixed-integer linear programming  (SMIP) framework to formulate the problem, where in the first stage, we decide values of variables $x_i$ and $h_{it}$ for all $i \in \mathcal I, \ t\in \mathcal{T}$. In the second stage, given each demand value $d_{jt}(\omega)$, we optimize the corresponding shipping and inventory plans using variables $s_{ijt}(\omega)$, $I_{jt}(\omega)$ and $u_{jt}(\omega)$ for each scenario $\omega \in \Omega$. 

The SMIP model is given by: \small
\begin{subequations}\label{eq:TS-MILP}
\begin{align}
    \min\quad& \sum_{i\in\mathcal{I}}c^o_ix_i+\sum_{i\in\mathcal{I},t\in\mathcal{T}}c^h_i h_{it} +\sum_{\omega\in\Omega}p^{\omega}\left(\sum_{i\in\mathcal{I},j\in\mathcal{J},t\in\mathcal{T}}c^s_{ijt}s_{ijt}(\omega)+\sum_{j\in\mathcal{J},t\in\mathcal{T}}\left(c^I_{jt}I_{jt}(\omega)+c^u_{jt}u_{jt}(\omega)\right)\right) \label{eq:TS-MILP-Obj}\\
    \text{s.t.}\quad 
    & h_{it}\le M_ix_i,\ \forall i\in\mathcal{I},\ t\in\mathcal{T}, \label{eq:TS-MILP-ConstrSpatial}\\
    & \sum_{i\in\mathcal{I}} h_{it} \le B_t,\ \forall t\in\mathcal{T}, \label{eq:TS-MILP-ConstrTemporal}\\
    & \sum_{j\in\mathcal{J}}s_{ijt}(\omega) \leq h_{it},\ \forall i\in\mathcal{I},\ t\in\mathcal{T},\ \omega\in\Omega,  \label{eq:TS-MILP-ConstrShip}\\
    & \sum_{i\in\mathcal I}s_{ijt}(\omega)+I_{j (t-1)}(\omega) + u_{jt}(\omega) = d_{jt}(\omega) + I_{jt}(\omega) + u_{j (t-1)}(\omega), \ \forall j\in\mathcal{J},\ t\in\mathcal{T},\ \omega\in\Omega, \label{eq:TS-MILP-ConstrFlow}\\
    & x_{i}\in\{0,1\},\ h_{it},\ s_{ijt}(\omega),\ I_{jt}(\omega),\ u_{jt}(\omega)\ge 0,\ \forall i\in\mathcal{I},\ j\in\mathcal{J},\ t\in\mathcal{T},\ \omega\in\Omega,\label{eq:TS-MILP-noneg}
\end{align}
\end{subequations}
\normalsize
where the objective function \eqref{eq:TS-MILP-Obj} minimizes the total cost of opening DCs, installing their capacities, and the total expected cost of shipping, holding inventory and back-orders. Constraints \eqref{eq:TS-MILP-ConstrSpatial} prohibit assigning any capacity to a DC that is not in use, where $M_i$ is the capacity limit of the DC $i$ for all $i\in \mathcal I$. Constraints \eqref{eq:TS-MILP-ConstrTemporal} impose temporal limitation on the total production capacity of all DCs. Constraints \eqref{eq:TS-MILP-ConstrShip} link the first-stage variables with the second-stage recourse decisions such that the total shipment from each DC is no more than its installed capacity during any period in any scenario. Constraints \eqref{eq:TS-MILP-ConstrFlow} are ``flow-balance'' constraints to reflect the changes of inventory and back-order levels, depending on the amount of resources received and demand level at each site $j$, for each period $t$ in each scenario $\omega$. Constraints \eqref{eq:TS-MILP-noneg} require binary valued $x$-variables and set all the other variables to be non-negative. 

\subsection{Model extension I: Inventory at DCs}
When the costs of production capacity ($c^h_{it}$) and shipment ($c^s_{ijt}$) are time-independent, it is without loss of generality to assume that there is no inventory kept in DCs because those additional products can be shipped to and stored at customer sites. However, if these costs vary over time, it may be beneficial to keep some products in DCs and ship them in the future. In this case, we define additional variables $I^D_{it}(\omega)$ as the inventory at DC $i$ in period $t$ with scenario $\omega$ for all $i\in\mathcal{I},\ t\in\mathcal{T},\ \omega\in\Omega$. Accordingly, Constraints \eqref{eq:TS-MILP-ConstrShip} can be modified as follows:
\begin{align*}
    \sum_{j\in\mathcal{J}}s_{ijt}(\omega) + I^D_{it}(\omega) = h_{it} + I^D_{i(t-1)}(\omega),\ \forall i\in\mathcal{I},\ t\in\mathcal{T},\ \omega\in\Omega,
\end{align*}
where $I^D_{i0}(\omega) = I^D_{i0}, \ \forall \omega\in\Omega$ is the (given) initial inventory at DC $i$.
Moreover, the objective function \eqref{eq:TS-MILP-Obj} can be recast as
{\footnotesize
\begin{align*}
   \min \ \sum_{i\in\mathcal{I}}c^o_ix_i+\sum_{i\in\mathcal{I}, t\in\mathcal{T}}c^h_{it}h_{it} +\sum_{\omega\in\Omega}p^{\omega}\left(\sum_{i\in\mathcal{I},j\in\mathcal{J},t\in\mathcal{T}}c^s_{ijt}s_{ijt}(\omega)+\sum_{i\in\mathcal{I},t\in\mathcal{T}}c^{ID}_{it}I_{it}^D(\omega)+\sum_{j\in\mathcal{J},t\in\mathcal{T}}\left(c^I_{jt}I_{jt}(\omega)+c^u_{jt}u_{jt}(\omega)\right)\right), 
\end{align*}
}
\normalsize
where $c^{ID}_{it}$ is the unit inventory cost at DC $i$ in period $t$ for all $i\in\mathcal{I},\ t\in\mathcal{T}$.

\subsection{Model extension II: Lead time for shipping}
The SMIP model \eqref{eq:TS-MILP} assumes that there is no lead time when shipping from DCs to demand locations. In the case of a constant lead time for every DC and demand-location pairs, we can simply shift the optimal production and shipment plans accordingly. However, if the lead time varies by location, 
it is worthwhile to add lead time into our model. We denote $L_{ij}$ as the lead time in sending resources from DC $i$ to customer location $j$ for all $i\in\mathcal{I},\ j\in\mathcal{J}$. Then, Constraints \eqref{eq:TS-MILP-ConstrFlow} can be adjusted to
\begin{align}
    \sum_{i\in\mathcal I: t \geq L_{ij}}s_{ij(t-L_{ij})}(\omega)+I_{jt-1}(\omega) + u_{jt}(\omega) = d_{jt}(\omega) + I_{jt}(\omega) + u_{jt-1}(\omega),\ \forall j\in\mathcal{J},\ t \in \mathcal{T}, \ \omega\in\Omega.
\end{align}

\normalsize
\subsection{Model extension III: Different types of DCs}
In practice, the total demand could be satisfied by multiple types of resources, and the DCs may have different processing, stocking and distributing costs, depending on their specialties. In such a setting, one can differentiate DCs in terms of their unit production, shipping and inventory costs and capacities by the types of resources they hold and distribute. For this purpose, we define $\mathcal{L}$ as the set of resource types with different requirements and let $x_{il}$ variable to be 1 if DC $i$ of type $l$ is built, and 0 otherwise. Similarly, we extend capacity $h_{itl}$, shipment $s_{ijtl}(\omega)$, inventory $I_{jtl}(\omega)$, unmet demand $u_{jtl}(\omega)$ decision variables and their associated costs to this setting by incorporating the type index $l$. 
We further introduce a new decision variable $\bar{d}_{jtl}(\omega)$ for representing the amount of the total demand in location $j$ at time $t$ satisfied by the product type $l$. 
The resulting extension of SMIP model \eqref{eq:TS-MILP} is given by: 
{\small
\begin{subequations}\label{eq:TS-MILP-DifferentDC}
\begin{align}
    \min\quad& \sum_{i\in\mathcal{I}, l\in\mathcal{L}} c^o_{il}x_{il} +\sum_{i\in\mathcal{I},t\in\mathcal{T}, l\in\mathcal{L}}c^h_{il} h_{itl} \notag \\
    & \qquad \qquad +\sum_{\omega\in\Omega}p^{\omega}\left(\sum_{i\in\mathcal{I},j\in\mathcal{J},t\in\mathcal{T}, l\in\mathcal{L}}c^s_{ijtl}s_{ijtl}(\omega) 
    +\sum_{j\in\mathcal{J},t\in\mathcal{T}, l\in\mathcal{L}} \left(c^I_{jtl}I_{jtl}(\omega)+  c^u_{jtl}u_{jtl}(\omega)\right)\right) \label{eq:TS-MILP-ObjDifferentDC}\\
    \text{s.t.}\quad 
    & h_{itl}\le M_{il}x_{il},\ \forall i\in\mathcal{I},\ t\in\mathcal{T}, \ l\in\mathcal{L},  \label{eq:TS-MILP-ConstrSpatialDifferentDC}\\
    & \sum_{i\in\mathcal{I}, l\in\mathcal{L}} h_{itl} \le B_t,\ \forall t\in\mathcal{T}, \label{eq:TS-MILP-ConstrTemporalDifferentDC}\\
    & \sum_{j\in\mathcal{J}}s_{ijtl}(\omega) \leq h_{itl},\ \forall i\in\mathcal{I},\ t\in\mathcal{T},\ l\in\mathcal{L}, \ \omega\in\Omega,  \label{eq:TS-MILP-ConstrShipDifferentDC}\\
    & \sum_{i\in\mathcal I}s_{ijtl}(\omega)+I_{j(t-1)l}(\omega) + u_{jtl}(\omega) = \bar{d}_{jtl}(\omega) + I_{jtl}(\omega) + u_{j(t-1)l}(\omega), \ \forall j\in\mathcal{J},\ t\in\mathcal{T},\ l\in\mathcal{L}, \ \omega\in\Omega, \label{eq:TS-MILP-ConstrFlowDifferentDC}\\
    & \sum_{l\in\mathcal{L}} \bar{d}_{jtl}(\omega) = d_{jt}(\omega),  \ \forall j\in\mathcal{J},\ t\in\mathcal{T},\ \omega\in\Omega,  \label{eq:TS-MILP-DemandDifferentTypesDifferentDC} \\
    & x_{il}\in\{0,1\},\ h_{itl},\ s_{ijtl}(\omega),\ I_{jtl}(\omega),\ u_{jtl}(\omega),\ \bar{d}_{jtl}(\omega)\ge 0, \ \forall i\in\mathcal{I},\ j\in\mathcal{J},\ l\in\mathcal{L}, \ t\in\mathcal{T},\ \omega\in\Omega.\label{eq:TS-MILP-nonegDifferentDC}
\end{align}
\end{subequations}}Constraints \eqref{eq:TS-MILP-ConstrFlowDifferentDC} satisfy the inventory and shipment relationship for each demand location, planning period, product type and demand realization, whereas Constraints  \eqref{eq:TS-MILP-DemandDifferentTypesDifferentDC} ensure  that the total demand is partitioned into different types of resources.

\section{A Distributionally Robust Optimization Approach}
\label{sec:dr-model}

The stochastic programming approach introduced in Section \ref{sec:TwoStageStochasticModel} assumes that the distribution of the underlying uncertainty is perfectly known and one can have access to a large amount of samples from the true distribution. However, these may not be true in practice, specifically during a pandemic, where the distribution may be misspecified under limited information. To model this type of distributional ambiguity, we consider a distributionally robust optimization model, in which optimal solutions are sought for the worst-case probability distribution within a family of candidate distributions, called an ``ambiguity set'' and  denoted by $\mathcal P$. We denote the random demand vector by $\boldsymbol{\xi} = [d_{jt}, \ j \in \mathcal{J}, \ t \in \mathcal{T}]^\top$ and the unknown probability distribution by $\mathbb P$. A distributionally robust analogous model of the SMIP model \eqref{eq:TS-MILP} is: 
\small
\begin{subequations} \label{eq:Type-1-DR-Prob}
\begin{align} 
    \min\quad & \sum_{i\in\mathcal{I}}c^o_ix_i+\sum_{i\in\mathcal{I},t\in\mathcal{T}}c^h_i h_{it}  + \max_{\mathbb{P} \in \mathcal{P}} \mathbb{E}[g(\boldsymbol{h},\boldsymbol{\xi})] \label{eq:Type-1-DR-Obj} \\
    \text{s.t.} \quad 
   & h_{it}\le M_ix_i,\ \forall i\in\mathcal{I},\ t\in\mathcal{T}, \label{eq:Type-1-DR-Capacity}\\
    & \sum_{i\in\mathcal{I}} h_{it} \le B_t,\ \forall t\in\mathcal{T}, \\
    & x_{i} \in \{0,1\}, \ h_{it}\ge 0,\ \forall i \in \mathcal{I}, \ t\in\mathcal{T},\label{eq:Type-1-DR-Nonneg}
    \end{align}
\end{subequations}
\normalsize
where \small
\begin{subequations}\label{eq:Type-1-DR-InnerProb}
\begin{align}
    g(\boldsymbol{h},\boldsymbol{\xi}) = \min\quad& \sum_{i\in\mathcal{I},j\in\mathcal{J},t\in\mathcal{T}}c^s_{ijt}s_{ijt} +\sum_{j\in\mathcal{J},t\in\mathcal{T}}\left(c^I_{jt}I_{jt}+c^u_{jt}u_{jt}\right) \label{eq:Type-1-DR-InnerProb-Obj}\\
    \text{s.t.}\quad 
    & \sum_{j\in\mathcal{J}}s_{ijt} \leq h_{it},\ \forall i\in\mathcal{I},\ t\in\mathcal{T}, \label{eq:TS-MILP-Type-1-DR-ConstrShip}\\
      & \sum_{i\in\mathcal I}s_{ijt}+I_{j(t-1)} + u_{jt} = \xi_{jt} + I_{jt} + u_{j(t-1)}, \ \forall j\in\mathcal{J},\ t\in\mathcal{T},\ \label{eq:TS-MILP-Type-1-DR-ConstrFlow}\\
    &s_{ijt},\ I_{j,t},\ u_{jt}\ge 0,\ \forall i\in\mathcal{I},\ j\in\mathcal{J},\ t\in\mathcal{T}.\ \label{eq:TS-MILP-Type-1-DR-noneg}
\end{align}
\end{subequations}
\normalsize
The objective function \eqref{eq:Type-1-DR-Obj} minimizes the cost of opening DCs, installing their capacities, and the worst-case expected cost under the set of candidate distributions in the ambiguity set $\mathcal P$. The inner problem \eqref{eq:Type-1-DR-InnerProb} corresponds to the second-stage problem and minimizes the total cost of shipping, holding inventory and backlogging given first-stage decision $\boldsymbol{h}$.

To express the form of uncertainty in distribution, two main classes of ambiguity sets can be defined for distributionally robust optimization models. They are (i) statistical distance-based ambiguity sets that consider distributions within a certain distance to a reference distribution \citep{Jiang2016,Esfahani2018}, and (ii) moment-based ambiguity sets that consider distributions based on moment information \citep{Delage2010,zhangambiguous}.
Since a reference distribution is essential in constructing the former class of ambiguity sets, such a distribution might be misleading in the case of a pandemic with abrupt and unprecedented changes in events and requires further data points to ensure its accuracy with a high level of confidence. Therefore, we focus on the moment-based ambiguity sets for describing possible distributions corresponding to the underlying demand uncertainty. 
To construct our ambiguity set, we bound a set of moment functions of $\boldsymbol{\xi}$ by certain parameters. Specifically, we consider $m$ different moment functions 
$\boldsymbol{f}:=(f_1(\boldsymbol{\xi}),\ldots,f_m(\boldsymbol{\xi}))^{\mathsf T}$. 
We assume that the random vector $\boldsymbol{\xi}$ has a finite support set $S$ containing possible realizations $S = \{\boldsymbol{\xi}^1, \ldots, \boldsymbol{\xi}^K\}$, for a given distribution $\mathbb{P}$. In this paper, we consider a finite support set for computational tractability. If using continuous support sets, the reformulation of the problem becomes as a semi-infinite integer program with an infinite number of constraints and  mixed-integer binary variables, and thus cannot be optimized directly using the state-of-the-art solvers. For example, \cite{zhang2018solving} develop approximation algorithms for solving 0-1 semi-definite program for the problem of distributionally robust surgery planning and we leave the continuous support case for our problem for future research but will test different sizes of the discrete support case to justify its proximity and result sensitivity.

Consequently, we reformulate $\mathbb{E}[g(\boldsymbol{h},\boldsymbol{\xi})]$ as  $\sum_{k\in[K]}p_kg(\boldsymbol{h},\boldsymbol{\xi}^k)$ and the inner problem~\eqref{eq:Type-1-DR-InnerProb} can be represented for each realization from the set $\{\boldsymbol{\xi}^1, \ldots, \boldsymbol{\xi}^K\}$.  
Then for each $s \in [m]$, the corresponding moment function $f_s(\boldsymbol{\xi}) = \prod_{j \in \mathcal{J},  t \in \mathcal{T}} \xi_{jt}^{k_{sjt}}$,
where $k_{sjt}$ is a non-negative integer indicating the power of $\xi_{jt}$ for the $s$-th moment function. 
The lower and upper bounds are defined by $\boldsymbol{l} := (l_1,\ldots, l_m)^{\mathsf T}$ and $ \boldsymbol{u} := (u_1,\ldots, u_m)^{\mathsf T}$, respectively. Correspondingly, we specify the ambiguity set $\mathcal P$ as follows. 
\begin{align}\label{eq:Type-1-Ambiguity}
\mathcal{P} :=\left\{\boldsymbol{p}\in\mathbb{R}_+^K\ |\ \boldsymbol{l}\le \sum_{k\in[K]}p_k\boldsymbol{f}(\boldsymbol{\xi}^k)\le\boldsymbol{u}\right\}.
\end{align}
Note that to guarantee that the ambiguity set~\eqref{eq:Type-1-Ambiguity} defines a set of probability distributions, one of the moment functions of $\boldsymbol{f}$ and its corresponding lower and upper bound values $l_s, u_s$, can be set as $\sum_{k\in [K]} p_{k} = 1$.
The following theorem demonstrates a reformulation of Model \eqref{eq:Type-1-DR-Prob} given the moment-based ambiguity set in \eqref{eq:Type-1-Ambiguity}.
\begin{theorem} 
\label{theorem:1}
If the ambiguity set defined in \eqref{eq:Type-1-Ambiguity} is non-empty, then Model \eqref{eq:Type-1-DR-Prob} can be reformulated as a single-level problem in the form: 
\begin{subequations}\label{Type-1-DR-Monolithic}
\begin{align}
\min\quad &\sum_{i\in\mathcal{I}}c^o_ix_i+\sum_{i\in\mathcal{I},t\in\mathcal T}c^h_{i}h_{it} -{\boldsymbol{\alpha}}^{\mathsf T} \boldsymbol{l}+{\boldsymbol{\beta}}^{\mathsf T} \boldsymbol{u} \\
\text{s.t.}\quad &\text{\eqref{eq:Type-1-DR-Capacity}--\eqref{eq:Type-1-DR-Nonneg}} \notag \\
&(-\boldsymbol{\alpha}+\boldsymbol{\beta})^{\mathsf T} \boldsymbol{f}(\boldsymbol{\xi}^k)\ge g(\boldsymbol{h},\boldsymbol{\xi}^k),\ \forall k\in[K], \label{eq:Type-1-DR-Monolithic-Constr} \\
& \boldsymbol{\alpha},\ \boldsymbol{\beta}\ge 0.
\end{align}
\end{subequations}
\end{theorem}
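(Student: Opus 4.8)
The plan is to exploit the finiteness of the support $S = \{\boldsymbol{\xi}^1, \ldots, \boldsymbol{\xi}^K\}$ so that the inner worst-case expectation becomes a linear program (LP) in the probability vector $\boldsymbol{p}$, then dualize that LP and fuse the resulting minimization with the outer minimization to obtain the single-level Model \eqref{Type-1-DR-Monolithic}. First, I would observe that on a finite support every $\mathbb{P}\in\mathcal{P}$ is identified with a vector $\boldsymbol{p}\in\mathbb{R}_+^K$ and $\mathbb{E}[g(\boldsymbol{h},\boldsymbol{\xi})]=\sum_{k\in[K]}p_k g(\boldsymbol{h},\boldsymbol{\xi}^k)$, as already noted in the text preceding the theorem. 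For a fixed first-stage decision $\boldsymbol{h}$, each $g(\boldsymbol{h},\boldsymbol{\xi}^k)$ is a fixed finite constant, because the recourse problem \eqref{eq:Type-1-DR-InnerProb} is always feasible (setting all shipments to zero and absorbing demand into backlog via \eqref{eq:TS-MILP-Type-1-DR-ConstrFlow} gives a feasible point) and its objective is bounded below by zero. Hence the inner maximum $\max_{\mathbb{P}\in\mathcal{P}}\mathbb{E}[g(\boldsymbol{h},\boldsymbol{\xi})]$ is exactly the LP $\max_{\boldsymbol{p}\ge 0}\{\sum_{k\in[K]}p_k g(\boldsymbol{h},\boldsymbol{\xi}^k):\boldsymbol{l}\le\sum_{k\in[K]}p_k\boldsymbol{f}(\boldsymbol{\xi}^k)\le\boldsymbol{u}\}$.

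Next, I would assign dual multipliers $\boldsymbol{\alpha}\ge 0$ to the lower-bound rows $\sum_{k}p_k\boldsymbol{f}(\boldsymbol{\xi}^k)\ge\boldsymbol{l}$ and $\boldsymbol{\beta}\ge 0$ to the upper-bound rows $\sum_{k}p_k\boldsymbol{f}(\boldsymbol{\xi}^k)\le\boldsymbol{u}$, form the Lagrangian, and collect the coefficient of each $p_k$. Requiring the supremum over $\boldsymbol{p}\ge 0$ to be finite forces every such coefficient to be non-positive, which is precisely the condition $(-\boldsymbol{\alpha}+\boldsymbol{\beta})^{\mathsf T}\boldsymbol{f}(\boldsymbol{\xi}^k)\ge g(\boldsymbol{h},\boldsymbol{\xi}^k)$ for all $k\in[K]$; when these hold, the remaining dual value is $-\boldsymbol{\alpha}^{\mathsf T}\boldsymbol{l}+\boldsymbol{\beta}^{\mathsf T}\boldsymbol{u}$. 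This reproduces the dual LP whose objective and constraint \eqref{eq:Type-1-DR-Monolithic-Constr} are exactly those of Model \eqref{Type-1-DR-Monolithic}.

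Then I would invoke strong LP duality to equate the inner maximum with this dual minimum. Primal feasibility follows from the hypothesis that $\mathcal{P}$ is non-empty, and boundedness follows because one of the moment rows encodes the normalization $\sum_{k\in[K]}p_k=1$, as remarked after \eqref{eq:Type-1-Ambiguity}, so $\boldsymbol{p}$ is confined to a bounded polytope and the finite-coefficient objective cannot diverge; thus no duality gap arises. Substituting the dual minimum in place of the inner maximum and observing that the outer problem in \eqref{eq:Type-1-DR-Prob} is itself a minimization, the two nested minimizations collapse into one joint minimization over $(\boldsymbol{x},\boldsymbol{h},\boldsymbol{\alpha},\boldsymbol{\beta})$ subject to \eqref{eq:Type-1-DR-Capacity}--\eqref{eq:Type-1-DR-Nonneg}, \eqref{eq:Type-1-DR-Monolithic-Constr}, and $\boldsymbol{\alpha},\boldsymbol{\beta}\ge 0$, which is Model \eqref{Type-1-DR-Monolithic}. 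I would keep $g(\boldsymbol{h},\boldsymbol{\xi}^k)$ on the right-hand side of \eqref{eq:Type-1-DR-Monolithic-Constr} as the parametric optimal value of the recourse LP, which is legitimate since it appears under a $\ge$ inequality.

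I expect the main obstacle to be the careful justification of strong duality and the accompanying exchange of the two minimizations, rather than the mechanical dualization: one must verify both primal feasibility (from non-emptiness of $\mathcal{P}$) and boundedness of the inner LP so that the optimal values coincide with no gap. A secondary point worth stating explicitly is the finiteness of every $g(\boldsymbol{h},\boldsymbol{\xi}^k)$, which ensures the LP objective coefficients are well defined and that the right-hand sides in \eqref{eq:Type-1-DR-Monolithic-Constr} are meaningful.
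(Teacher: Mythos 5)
Your proposal is correct and follows essentially the same route as the paper's proof: rewrite the inner worst-case expectation as a finite-dimensional LP over $\boldsymbol{p}$, dualize it with multipliers $\boldsymbol{\alpha},\boldsymbol{\beta}$ for the lower- and upper-bound moment constraints, invoke strong LP duality (feasibility coming from non-emptiness of $\mathcal{P}$), and collapse the resulting min--min into the single-level Model \eqref{Type-1-DR-Monolithic}. Your added justifications---finiteness of each $g(\boldsymbol{h},\boldsymbol{\xi}^k)$ via complete recourse and boundedness of the inner LP via the normalization row---are sound refinements of details the paper leaves implicit, not a different argument.
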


\begin{proof}
Explicitly stating the constraints in the ambiguity set $\mathcal P$, we first express the inner maximization problem as
\begin{subequations} \label{eq:Type-1-DR-Prob-InnerExpectation}
\begin{align} 
    \max \quad & \sum_{k \in [K]} p_k g(\boldsymbol{h},\boldsymbol{\xi}^k) \  \\
    \text{s.t.} \quad 
    & \sum_{k \in [K]} p_k \boldsymbol{f}(\boldsymbol{\xi}^k) \ge \boldsymbol{l}, \\
    &  \sum_{k \in [K]} p_k \boldsymbol{f}(\boldsymbol{\xi}^k) \le \boldsymbol{u}, \\
    & p_k \geq 0, \ \forall k \in [K]. 
    \end{align}
\end{subequations}
Letting $\boldsymbol{\alpha}$, $\boldsymbol{\beta}$ be the dual variables associated with the lower- and upper-bound constraints in \eqref{eq:Type-1-DR-Prob-InnerExpectation}, respectively, we obtain its dual formulation as 
\begin{subequations} \label{eq:Type-1-DR-Prob-InnerExpectationDual}
\begin{align}
\min_{\boldsymbol{\alpha},\boldsymbol{\beta}}\quad & -{\boldsymbol{\alpha}}^{\mathsf T} \boldsymbol{l}+{\boldsymbol{\beta}}^{\mathsf T} \boldsymbol{u} \\
\text{s.t.}\quad &(-\boldsymbol{\alpha}+\boldsymbol{\beta})^{\mathsf T} \boldsymbol{f}(\boldsymbol{\xi}^k)\ge g(\boldsymbol{h},\boldsymbol{\xi}^k),\ \forall k\in[K], \\
& \boldsymbol{\alpha},\ \boldsymbol{\beta}\ge 0.
\end{align}
\end{subequations}
Following the strong duality between  \eqref{eq:Type-1-DR-Prob-InnerExpectation} and \eqref{eq:Type-1-DR-Prob-InnerExpectationDual}, and replacing $\max_{\boldsymbol{p} \in \mathcal{P}} \sum_{k\in [K]}p_kg(\boldsymbol{h},\boldsymbol{\xi}^k)$ with \eqref{eq:Type-1-DR-Prob-InnerExpectationDual} in the outer optimization problem \eqref{eq:Type-1-DR-Prob}, we obtain the desired result. 
\end{proof}

Denote the empirical first and second moments of the uncertain demand parameter at location $j$ and period $t$ as $\mu_{jt}$ and $S_{jt}$, respectively. We examine a special form of ambiguity set \eqref{eq:Type-1-Ambiguity}, where for the first and second moments of each demand parameter, we consider their lower and upper bounds as follows:
\begin{subequations}
\label{eq:Type-1-Ambiguity-Special}
\begin{align}
\mathcal{P} =\Biggl \{\boldsymbol{p}\in\mathbb{R}_{+}^K\ |&\sum_{k\in [K]}p_{k}=1,\label{eq:Type-1-Ambiguity-Special-normal}\\
& \mu_{jt}-\epsilon_{jt}^{\mu}\le\sum_{k\in [K]} p_{k}\xi_{jt}^k\le \mu_{jt}+\epsilon_{jt}^{\mu},\ \forall j\in \mathcal{J},\ t\in\mathcal{T},\label{eq:Type-1-Ambiguity-Special-firstMoment}\\
& S_{jt}\underline{\epsilon}_{jt}^S\le\sum_{k\in [K]} p_{k}(\xi_{jt}^k)^2\le S_{jt}\bar{\epsilon}_{jt}^S,\ \forall j\in \mathcal{J},\ t\in\mathcal{T}\label{eq:Type-1-Ambiguity-Special-secondMoment}\Biggr \}.
\end{align}
\end{subequations}
Here, Constraint \eqref{eq:Type-1-Ambiguity-Special-normal} is a normalization constraint to ensure that $\{p_k\}_{k\in[K]}$ form a probability distribution. Constraints \eqref{eq:Type-1-Ambiguity-Special-firstMoment} bound the mean of parameter $\xi_{jt}$ in an $\epsilon_{jt}^{\mu}$-interval of the empirical mean $\mu_{jt}$, and constraints \eqref{eq:Type-1-Ambiguity-Special-secondMoment} bound the second moment of parameter $\xi_{jt}$ via scaling the empirical second moment $S_{jt}$ with parameters $0 \leq \underline{\epsilon}_{jt}^S \leq 1 \leq \bar{\epsilon}_{jt}^S$ for all $j\in\mathcal{J},\ t\in\mathcal{T}$. We note that under the perfect knowledge assumption of first and second moment information, the robustness parameters can be set as $\epsilon_{jt}^\mu = 0$, and $\underline{\epsilon}^\sigma_{jt}$ = $\overline{\epsilon}^\sigma_{jt}$ = 1, for all $j\in \mathcal{J},\ t\in\mathcal{T}$. Under the uncertainty of a pandemic, adjusting these parameters helps decision makers make inferences for representing spatiotemporal demand by considering different phases of the pandemic and taking into account potential deviations from predictive results. 

To obtain a mixed-integer linear programming reformulation of the single-level formulation~\eqref{Type-1-DR-Monolithic} given ambiguity set~\eqref{eq:Type-1-Ambiguity-Special}, we first describe some intermediate steps and results. In this regard, we analyze certain properties of the function $g(\boldsymbol{h},\boldsymbol{\xi}^k)$. 

\begin{proposition} \label{prop:InnerConvexPiecewise}
Function $g(\boldsymbol{h},\boldsymbol{\xi}^k)$ is a convex piecewise linear function in $\boldsymbol{h}$ for every $k \in [K]$. 
\end{proposition}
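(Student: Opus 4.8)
The plan is to exploit the fact that, for a fixed scenario $\boldsymbol{\xi}^k$, the recourse function $g(\boldsymbol{h},\boldsymbol{\xi}^k)$ is the optimal value of the linear program \eqref{eq:Type-1-DR-InnerProb} in which the first-stage vector $\boldsymbol{h}$ enters \emph{only} on the right-hand side of the capacity constraints \eqref{eq:TS-MILP-Type-1-DR-ConstrShip}; neither the cost vector nor the constraint matrix depends on $\boldsymbol{h}$. This is precisely the setting in which the optimal-value function of an LP is convex and piecewise linear in the right-hand side, and I would make this concrete through linear programming duality.

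First I would dualize \eqref{eq:Type-1-DR-InnerProb} for fixed $\boldsymbol{h}$ and $\boldsymbol{\xi}^k$, attaching multipliers $\boldsymbol{\lambda}\ge 0$ to the capacity constraints \eqref{eq:TS-MILP-Type-1-DR-ConstrShip} and free multipliers $\boldsymbol{\gamma}$ to the flow-balance equalities \eqref{eq:TS-MILP-Type-1-DR-ConstrFlow}. The resulting dual has the form $\max_{(\boldsymbol{\lambda},\boldsymbol{\gamma})\in D}\{-\sum_{i\in\mathcal I,t\in\mathcal T} h_{it}\lambda_{it} + q(\boldsymbol{\xi}^k;\boldsymbol{\gamma})\}$, where the dual feasible polyhedron $D$ depends only on the costs $c^s,c^I,c^u$ and is \emph{independent} of $\boldsymbol{h}$, and where the objective is affine in $\boldsymbol{h}$ for every fixed dual point. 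The key structural observation is that $\boldsymbol{h}$ has migrated entirely into the affine objective of a parametric family of maximizations over one and the same feasible set $D$.

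Next I would verify the hypotheses needed for strong duality and for the maximum to be well behaved. The primal \eqref{eq:Type-1-DR-InnerProb} is feasible for every $\boldsymbol{h}\ge 0$: taking $\boldsymbol{s}=\boldsymbol{0}$ satisfies \eqref{eq:TS-MILP-Type-1-DR-ConstrShip} since $h_{it}\ge 0$, and the flow-balance relation then recursively determines a net position that can always be split into nonnegative inventory $I_{jt}$ and backlog $u_{jt}$ by setting one of them to zero. Moreover the objective \eqref{eq:Type-1-DR-InnerProb-Obj} is nonnegative, so $g(\boldsymbol{h},\boldsymbol{\xi}^k)\ge 0$ and the primal is bounded below; hence $D$ is nonempty and, by LP strong duality, $g(\boldsymbol{h},\boldsymbol{\xi}^k)$ equals the dual optimal value for every $\boldsymbol{h}\ge 0$.

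Finally I would conclude by the vertex argument. For each fixed $\boldsymbol{h}\ge 0$ the dual objective is bounded above on $D$ (it equals the finite quantity $g$), so its optimum is attained at an extreme point of $D$; since $D$ is a fixed polyhedron with finitely many extreme points, $g(\boldsymbol{h},\boldsymbol{\xi}^k)$ is the pointwise maximum over this finite vertex set of affine functions of $\boldsymbol{h}$, and a finite pointwise maximum of affine functions is convex and piecewise linear. The step requiring the most care is the feasibility/boundedness check that legitimizes strong duality and guarantees the maximizer sits at a vertex; once the dual objective is seen to be affine in $\boldsymbol{h}$ over the $\boldsymbol{h}$-independent set $D$, the convex piecewise-linear conclusion follows from the standard max-of-affine representation. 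If one prefers to sidestep extreme-point technicalities when $D$ fails to be pointed, the same conclusion follows directly from the classical result that the value function of a linear program is convex and piecewise linear in its right-hand side.
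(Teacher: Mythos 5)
Your proposal is correct and follows essentially the same route as the paper's proof: dualize the second-stage LP so that $\boldsymbol{h}$ appears only linearly in the objective over a dual feasible region that does not depend on $\boldsymbol{h}$, then represent $g(\boldsymbol{h},\boldsymbol{\xi}^k)$ as a maximum of finitely many affine functions via the extreme points of that region. Your write-up is in fact somewhat more careful than the paper's, since you explicitly justify primal feasibility (complete recourse via $\boldsymbol{s}=\boldsymbol{0}$ and splitting the flow balance into inventory/backlog), boundedness, and the pointedness caveat that the paper's extreme-point step implicitly assumes.
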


\begin{proof}
First, note that problem~\eqref{eq:Type-1-DR-InnerProb} is always feasible as it has complete recourse. Letting $\theta_{it}$, $\gamma_{jt}$ be the dual variables, we obtain the dual of problem~\eqref{eq:Type-1-DR-InnerProb} as follows. 
\begin{subequations} \label{eq:DualInnerProblem}
\begin{align} 
    \max\quad & \sum_{i\in\mathcal{I}} \sum_{t \in \mathcal{T}} h_{it} \theta_{it} + \sum_{j \in \mathcal{J}} \sum_{t = 2}^{T} \xi^k_{jt} \gamma_{jt} + \sum_{j \in \mathcal{J}} (\xi^k_{j1} + u_{j0} - I_{j0}) \gamma_{j1}  \ \label{eq:DualInnerProblem-Obj} \\
    \text{s.t.} \quad 
    & \theta_{it} + \gamma_{jt} \leq c^s_{ijt}, \ \forall  i \in \mathcal{I}, \ j \in \mathcal{J}, \ t \in \mathcal{T}, \label{eq:DualInnerProblem-ConstrFirst} \\
    & - \gamma_{jt} + \gamma_{j(t+1)} \leq c^{I}_{jt}, \ \forall j \in \mathcal{J}, \ t \in [T-1], \\
    & - \gamma_{jT} \leq c^{I}_{jT} \ \forall j \in \mathcal{J}, \\
     & \gamma_{jt} - \gamma_{j(t+1)} \leq c^{u}_{jt}, \ \forall j \in \mathcal{J}, \ t \in [T-1], \\
    & \gamma_{jT} \leq c^{u}_{jT} \ \forall j \in \mathcal{J}, \\
    & \theta_{it} \leq 0 \ \forall i \in \mathcal{I},\ t \in \mathcal{T}. \label{eq:DualInnerProblem-ConstrLast}
    \end{align}
\end{subequations}
As the dual problem \eqref{eq:DualInnerProblem} is feasible, at least one of the optimal solutions of this problem is at one of its extreme points. Considering the fact that objective function of this problem is linear in $\boldsymbol{h}$, the resulting optimal objective value can be represented as the maximum of the extreme-point-based function values. Hence, $g(\boldsymbol{h},\boldsymbol{\xi}^k)$ becomes a convex piecewise linear function in $\boldsymbol{h}$. 
\end{proof}

\begin{lemma} \label{lemmaEquivalentSets}
Denote $g(\boldsymbol{h},\boldsymbol{\xi}^k)$ by a compact form $\min_{\boldsymbol{y}} \{\boldsymbol{c}^\top \boldsymbol{y}: \boldsymbol{Ay} \geq \boldsymbol{h}, \boldsymbol{Dy} \geq \boldsymbol{f}\}$. Then, set $\Upsilon^1 := \{(\boldsymbol{h}, m): g(\boldsymbol{h},\boldsymbol{\xi}^k) \leq m\}$ for each $\boldsymbol{\xi}^k \in S$ has a polyhedral representation $\Upsilon^2 := \{(\boldsymbol{h}, m): \exists \ \boldsymbol{y} \ \text{s.t.} \ \boldsymbol{c}^\top \boldsymbol{y} \leq m, \ \boldsymbol{Ay} \geq \boldsymbol{h}, \ \boldsymbol{Dy} \geq \boldsymbol{f}\}$. 
\end{lemma}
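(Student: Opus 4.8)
The plan is to prove the set equality $\Upsilon^1 = \Upsilon^2$ directly by establishing the two inclusions, after which the polyhedrality of $\Upsilon^2$ follows from an elementary projection argument. The key structural fact I would rely on is that $\boldsymbol{h}$ enters the inner problem only through the right-hand side of the constraints $\boldsymbol{Ay} \ge \boldsymbol{h}$, while the fixed data $\boldsymbol{\xi}^k$ together with the given initial inventory and backlog are absorbed into $\boldsymbol{f}$ in the remaining constraints $\boldsymbol{Dy} \ge \boldsymbol{f}$ (the equality flow-balance constraints \eqref{eq:TS-MILP-Type-1-DR-ConstrFlow} being split into two inequalities). Here $\boldsymbol{y} = (\boldsymbol{s}, \boldsymbol{I}, \boldsymbol{u})$ collects the second-stage variables.

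First I would prove $\Upsilon^2 \subseteq \Upsilon^1$, which is immediate: if $(\boldsymbol{h}, m) \in \Upsilon^2$, then the witnessing $\boldsymbol{y}$ is feasible for the minimization defining $g(\boldsymbol{h},\boldsymbol{\xi}^k)$, so $g(\boldsymbol{h},\boldsymbol{\xi}^k) \le \boldsymbol{c}^\top \boldsymbol{y} \le m$, giving $(\boldsymbol{h}, m) \in \Upsilon^1$; this direction needs no regularity assumption. For the reverse inclusion $\Upsilon^1 \subseteq \Upsilon^2$, I would take $(\boldsymbol{h}, m)$ with $g(\boldsymbol{h},\boldsymbol{\xi}^k) \le m$ and produce a feasible $\boldsymbol{y}$ attaining the value $g(\boldsymbol{h},\boldsymbol{\xi}^k)$. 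This is where I invoke the facts established in the proof of Proposition \ref{prop:InnerConvexPiecewise}: the inner problem has complete recourse (hence is feasible for every $\boldsymbol{h}$), and its dual \eqref{eq:DualInnerProblem} is feasible, so the inner linear program is bounded. By the fundamental theorem of linear programming a feasible and bounded LP attains its optimum, so there exists $\boldsymbol{y}^\ast$ with $\boldsymbol{A}\boldsymbol{y}^\ast \ge \boldsymbol{h}$, $\boldsymbol{D}\boldsymbol{y}^\ast \ge \boldsymbol{f}$, and $\boldsymbol{c}^\top \boldsymbol{y}^\ast = g(\boldsymbol{h},\boldsymbol{\xi}^k) \le m$; this $\boldsymbol{y}^\ast$ certifies $(\boldsymbol{h}, m) \in \Upsilon^2$.

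Combining the two inclusions yields $\Upsilon^1 = \Upsilon^2$. To finish, I would observe that $\Upsilon^2$ is, by definition, the image of the polyhedron $\{(\boldsymbol{h}, m, \boldsymbol{y}): \boldsymbol{c}^\top \boldsymbol{y} \le m, \ \boldsymbol{A}\boldsymbol{y} \ge \boldsymbol{h}, \ \boldsymbol{D}\boldsymbol{y} \ge \boldsymbol{f}\}$ under the linear projection that deletes the $\boldsymbol{y}$-coordinates; since projections of polyhedra are polyhedra (e.g. by Fourier--Motzkin elimination), $\Upsilon^2$ is indeed a polyhedral (lifted) representation of the epigraph set $\Upsilon^1$. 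The practical payoff I would emphasize is that the nonlinear constraint $g(\boldsymbol{h},\boldsymbol{\xi}^k) \le m$ appearing implicitly in \eqref{eq:Type-1-DR-Monolithic-Constr} can thereby be replaced by the linear system defining $\Upsilon^2$, at the cost of the auxiliary variables $\boldsymbol{y}$.

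The only step carrying genuine content is the attainment argument in $\Upsilon^1 \subseteq \Upsilon^2$: one must rule out that the infimum defining $g$ is merely approached but not achieved. Complete recourse and dual feasibility together guarantee a finite, attained optimum, so the obstacle is dispatched by citing Proposition \ref{prop:InnerConvexPiecewise} rather than by any new computation; everything else is bookkeeping about how $\boldsymbol{h}$, $\boldsymbol{\xi}^k$, and the initial conditions are distributed between $\boldsymbol{A}$, $\boldsymbol{D}$, and $\boldsymbol{f}$.
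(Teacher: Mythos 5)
Your proof is correct and follows essentially the same route as the paper's: both establish $\Upsilon^1 = \Upsilon^2$ by the two inclusions, with the easy direction using feasibility of the witnessing $\boldsymbol{y}$ and the other direction using an optimal solution of the inner problem. You are in fact slightly more careful than the paper, since you explicitly justify attainment of the minimum via complete recourse and dual feasibility (Proposition \ref{prop:InnerConvexPiecewise}) and note the projection argument for polyhedrality, both of which the paper leaves implicit.
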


\begin{proof} 
Demonstrated in Proposition \ref{prop:InnerConvexPiecewise}, $g(\boldsymbol{h},\boldsymbol{\xi}^k)$ is a convex function. Consider any $(\boldsymbol{h}, m) \in \Upsilon^1$. The optimal solution of $g(\boldsymbol{h},\boldsymbol{\xi}^k)$, say  $\boldsymbol{y}^1$, satisfies $\boldsymbol{c}^\top \boldsymbol{y}^1 \leq m, \boldsymbol{Ay}^1 \geq \boldsymbol{h}, \boldsymbol{Dy}^1 \geq \boldsymbol{f}$, proving that $(\boldsymbol{h}, m) \in \Upsilon^2$. For the other direction of the proof, consider any $(\boldsymbol{h}, m) \in \Upsilon^2$. Then, there exists a vector $\boldsymbol{y}^2$ such that it satisfies $\boldsymbol{c}^\top \boldsymbol{y}^2 \leq m, \boldsymbol{Ay}^2 \geq \boldsymbol{h}, \boldsymbol{Dy}^2 \geq \boldsymbol{f}$. As $\boldsymbol{y}^2$ is a feasible solution of the problem $\min_{\boldsymbol{y}} \{\boldsymbol{c}^\top \boldsymbol{y}: \boldsymbol{Ay} \geq \boldsymbol{h}, \boldsymbol{Dy} \geq \boldsymbol{f}\}$, $g(\boldsymbol{h},\boldsymbol{\xi}^k) \leq \boldsymbol{c}^\top \boldsymbol{y}^2 \leq m$, and therefore, $(\boldsymbol{h}, m) \in \Upsilon^1$. 
\end{proof}

Combining Lemma \ref{lemmaEquivalentSets} with the single-level formulation~\eqref{Type-1-DR-Monolithic}, we propose a mixed-integer linear programming reformulation of Model \eqref{eq:Type-1-DR-Prob} as follows.  

\begin{theorem}
Using the ambiguity set defined in \eqref{eq:Type-1-Ambiguity-Special}, Model \eqref{eq:Type-1-DR-Prob} is equivalent to the following mixed-integer linear program:
\begin{subequations}\label{eq:Type-1-DR-Monolithic-Special}
\begin{align}
\min\quad &\sum_{i\in\mathcal{I}}c^o_ix_i+\sum_{i\in\mathcal{I},t\in\mathcal T}c^h_{i} h_{it}-\alpha_{1}-\sum_{j\in\mathcal{J},t\in\mathcal{T}}\alpha_{2jt}(\mu_{jt}-\epsilon_{jt}^{\mu}) -\sum_{j\in\mathcal{J},t\in\mathcal{T}}\alpha_{3jt}(\mu_{jt}^2+\sigma_{jt}^2)\underline{\epsilon}^S_{jt}\nonumber\\
\quad&+\beta_{1}+\sum_{j\in\mathcal{J},t\in\mathcal{T}}\beta_{2jt}(\mu_{jt}+\epsilon_{jt}^{\mu})
+\sum_{j\in\mathcal{J},t\in\mathcal{T}}\beta_{3jt}(\mu_{jt}^2+\sigma_{jt}^2)\bar{\epsilon}^S_{jt}\\
\text{s.t.}\quad &\text{\eqref{eq:Type-1-DR-Capacity}--\eqref{eq:Type-1-DR-Nonneg}} \notag \\
&-\alpha_{1}+\beta_{1}+\sum_{j\in \mathcal J, t\in \mathcal{T}}\xi_{jt}^k(-\alpha_{2jt}+\beta_{2jt})+\sum_{j\in\mathcal{J},t\in\mathcal{T}}(\xi_{jt}^k)^2(-\alpha_{3jt}+\beta_{3jt})\ge \Phi^{k},\ \forall k \in [K], \\
    & \Phi^{k} = \sum_{i\in\mathcal{I},j\in\mathcal{J},t\in\mathcal{T}}c^s_{ijt}s^k_{ijt}  +\sum_{j\in\mathcal{J},t\in\mathcal{T}}\left(c^I_{jt}I^k_{jt}+c^u_{jt}u^k_{jt}\right),\ \forall k \in [K], \\
    & \sum_{j\in\mathcal{J}}s^k_{ijt} \leq h_{it},\ \forall i\in\mathcal{I},\ t\in\mathcal{T},\ \forall k \in [K], \\
      & \sum_{i\in\mathcal I} s^k_{ijt}+I^k_{j(t-1)} + u^k_{jt} = \xi^k_{jt} + I^k_{jt} + u^k_{j(t-1)}, \ \forall j\in\mathcal{J},\ t\in\mathcal{T},\ \forall k \in [K], \\
    & s^k_{ijt},\ I^k_{j,t},\ u^k_{jt}\ge 0,\ \forall i\in\mathcal{I},\ j\in\mathcal{J},\ t\in\mathcal{T}, \ \forall k \in [K], \\
& \boldsymbol{\alpha},\ \boldsymbol{\beta}\ge \boldsymbol{0}.
\end{align}
\end{subequations}
\end{theorem}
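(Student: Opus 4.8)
The plan is to specialize the general single-level reformulation of Theorem \ref{theorem:1} to the particular moment functions encoded in the ambiguity set \eqref{eq:Type-1-Ambiguity-Special}, and then to eliminate the implicit value function $g(\boldsymbol{h},\boldsymbol{\xi}^k)$ via Lemma \ref{lemmaEquivalentSets}. First I would identify the abstract moment vector $\boldsymbol{f}(\boldsymbol{\xi})$ with the three families of constraints in \eqref{eq:Type-1-Ambiguity-Special}: the constant function $f\equiv 1$ with the normalization \eqref{eq:Type-1-Ambiguity-Special-normal} (whose lower and upper bounds both equal one), the first-order monomials $\xi_{jt}$ with the mean constraints \eqref{eq:Type-1-Ambiguity-Special-firstMoment}, and the second-order monomials $\xi_{jt}^2$ with the second-moment constraints \eqref{eq:Type-1-Ambiguity-Special-secondMoment}. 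Matching these rows to the generic bounds $\boldsymbol{l},\boldsymbol{u}$ of \eqref{eq:Type-1-Ambiguity} introduces the dual variables $(\alpha_1,\beta_1)$, $(\alpha_{2jt},\beta_{2jt})$, and $(\alpha_{3jt},\beta_{3jt})$ for the normalization, first-moment, and second-moment constraints, respectively, all nonnegative as dictated by Theorem \ref{theorem:1}.

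Next I would substitute these identifications directly into the monolithic formulation \eqref{Type-1-DR-Monolithic}. The generic objective term $-\boldsymbol{\alpha}^{\mathsf T}\boldsymbol{l}+\boldsymbol{\beta}^{\mathsf T}\boldsymbol{u}$ expands into the explicit cost appearing in \eqref{eq:Type-1-DR-Monolithic-Special}, using the bound values $\mu_{jt}\mp\epsilon_{jt}^{\mu}$ for the mean rows and $S_{jt}\underline{\epsilon}_{jt}^S$, $S_{jt}\bar{\epsilon}_{jt}^S$ for the second-moment rows; here I would invoke the elementary identity $S_{jt}=\mu_{jt}^2+\sigma_{jt}^2$ relating the empirical second moment to the empirical mean and variance to put those coefficients in the stated form. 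Likewise, the generic constraint $(-\boldsymbol{\alpha}+\boldsymbol{\beta})^{\mathsf T}\boldsymbol{f}(\boldsymbol{\xi}^k)\ge g(\boldsymbol{h},\boldsymbol{\xi}^k)$ becomes, for each $k\in[K]$, the row $-\alpha_1+\beta_1+\sum_{j,t}\xi_{jt}^k(-\alpha_{2jt}+\beta_{2jt})+\sum_{j,t}(\xi_{jt}^k)^2(-\alpha_{3jt}+\beta_{3jt})\ge g(\boldsymbol{h},\boldsymbol{\xi}^k)$.

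The remaining step removes the implicit value function from the right-hand side. I would apply Lemma \ref{lemmaEquivalentSets}: since the constraint requires $g(\boldsymbol{h},\boldsymbol{\xi}^k)$ to be dominated by the left-hand side, I would introduce an epigraph variable $\Phi^k$, rewrite the inequality as the pair $(-\alpha_1+\beta_1+\cdots)\ge\Phi^k$ and $g(\boldsymbol{h},\boldsymbol{\xi}^k)\le\Phi^k$, and then replace the latter by the polyhedral representation $\Upsilon^2$ guaranteed by the lemma. This yields explicit scenario-indexed recourse variables $s^k_{ijt},I^k_{jt},u^k_{jt}$ obeying the shipment, flow-balance, and nonnegativity constraints of \eqref{eq:Type-1-DR-InnerProb}, with $\Phi^k$ equated to their linear recourse cost. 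Because the outer problem is a minimization and the exhibited recourse is feasible, any such recourse certifies the bound, so the substitution is exact rather than a relaxation; collecting all rows reproduces \eqref{eq:Type-1-DR-Monolithic-Special} exactly.

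The main obstacle I anticipate is the epigraph linearization in the last step, specifically confirming that the orientation of the value-function inequality is compatible with Lemma \ref{lemmaEquivalentSets}. The lemma describes the sublevel set $\{(\boldsymbol{h},m):g(\boldsymbol{h},\boldsymbol{\xi}^k)\le m\}$, which is precisely the orientation needed since $g$ enters as a lower bound to be dominated; one must verify that the existence of a feasible recourse with cost at most $\Phi^k$ is equivalent to $g(\boldsymbol{h},\boldsymbol{\xi}^k)\le\Phi^k$, and that nothing in the surrounding minimization drives $\Phi^k$ strictly below $g(\boldsymbol{h},\boldsymbol{\xi}^k)$ so as to distort optimality. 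The remaining work — tracking the signs of the nonnegative duals and applying the second-moment identity — is routine bookkeeping.
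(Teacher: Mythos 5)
Your proposal is correct and takes essentially the same route as the paper's own proof: it specializes the single-level reformulation of Theorem \ref{theorem:1} to the moment functions $\{1,\,\xi_{jt},\,\xi_{jt}^2\}$ encoded in \eqref{eq:Type-1-Ambiguity-Special} (with the identity $S_{jt}=\mu_{jt}^2+\sigma_{jt}^2$ for the bound coefficients), and then eliminates $g(\boldsymbol{h},\boldsymbol{\xi}^k)$ through the polyhedral representation of Lemma \ref{lemmaEquivalentSets}, which is exactly how the paper derives \eqref{eq:Type-1-DR-Monolithic-Special}. The exactness concern you flag in the last step is precisely what Lemma \ref{lemmaEquivalentSets} resolves, so your argument is complete.
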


\begin{proof}
To obtain the mixed-integer linear programming reformulation, we first revise the single-level formulation~\eqref{Type-1-DR-Monolithic} under the ambiguity set defined in \eqref{eq:Type-1-Ambiguity-Special}.
Then, we derive a polyhedral representation of the set $\{(\boldsymbol{h},m): g(\boldsymbol{h}, \boldsymbol{\xi}^k) \leq m\}$ where $m$ is the left-hand side of constraint \eqref{eq:Type-1-DR-Monolithic-Constr}. Using Lemma \ref{lemmaEquivalentSets} and the definition of the function $g(\boldsymbol{h}, \boldsymbol{\xi}^k)$, we derive the resulting formulation~\eqref{eq:Type-1-DR-Monolithic-Special}. 
\end{proof}

\section{Case Studies of COVID-19 Vaccine and Test Kit Distribution}
\label{sec:computation}

In this section, we present two comprehensive case studies of the presented optimization frameworks to conduct resource distribution under uncertain spatiotemporal demand for COVID-19 testing and vaccination. In Section \ref{sec:VaccineCaseStudy}, we consider COVID-19 vaccine distribution in the US, and in Section \ref{sec:TestkitCaseStudy}, we consider COVID-19 test kit distribution in the State of Michigan.  For both types of instances, we compute solutions of three different approaches, namely, the deterministic (DT) approach where the SMIP model \eqref{eq:TS-MILP} only contains one representative scenario in set $\Omega$, which can be the mean values of demand, the stochastic programming (SP) approach described in Section \ref{sec:TwoStageStochasticModel}, and the distributionally robust optimization (DRO) approach described in Section \ref{sec:dr-model}. For the SP and DRO approaches, we construct the optimization models based on 100 in-sample scenarios that are independently and identically generated from a given nominal distribution of the demand, and parameters in the nominal distribution follow estimated demand mean and variance based on real-world COVID-19 infection data in the corresponding regions of interest that we will describe later. Different solutions will be evaluated in out-of-sample scenarios to see their performance in terms of operational cost and possible lost sale/backlog (if demand values in the out-of-sample tests become extremely high). Note that the out-of-sample scenarios can be generated from the same nominal distribution for generating the in-sample data, or be a different one. The latter represents the case when future infection trends and thus future demand values become significantly different from what has been observed. We will test both cases in Section \ref{sec:sensitivity} to examine how the differences in data-generating distributions affect the solution performance. We will also describe details about how to generate out-of-sample scenarios and the results for the vaccine-distribution instances and test-kit distribution instances separately.

We use Gurobi 9.0.3 coded in Python 3.6.8 for solving all mixed-integer programming models. Our numerical tests are conducted on a Windows 2012 Server with 128 GB RAM and an Intel 2.2 GHz processor.

\subsection{Vaccine Distribution in the US}
\label{sec:VaccineCaseStudy}

\subsubsection{Experimental design and setup}
We test different vaccine-allocation phases following the government and CDC's guidelines, where in the earlier phases only healthcare workers and prioritized population groups (e.g., seniors more than 65 years old) are targeted for vaccination and in the later phases, larger segments of the population are recommended for vaccination to stop virus transmission. Specifically, we follow a recent epidemiological study by \citet{wang2020global} and consider three phases to distribute the vaccines, aiming to vaccinate 6.18\%, 41.97\%, 51.85\% of the overall adult population in the US, respectively. We also take into account certain levels of vaccine hesitancy in the population during each phase, and follow the results by \citet{malik2020determinants} who surveyed the US adult population to understand the acceptance of COVID-19 vaccines, among 10 Department of Health and Human Services (DHHS) regions listed below. 
\begin{itemize}
    \item Region 1 -- Boston (Connecticut, Maine, Massachusetts, New Hampshire, Rhode Island, and Vermont); 
    \item Region 2 -- New York
(New Jersey, New York, and Puerto Rico);
\item Region
3 -- Philadelphia (Delaware, District of Columbia, Maryland, Pennsylvania, Virginia, and West Virginia);
\item Region 4 -- Atlanta (Alabama, Florida, Georgia, Kentucky, Mississippi, North Carolina, South Carolina,
and Tennessee);
\item Region 5 -- Chicago (Illinois, Indiana, Michigan, Minnesota, Ohio,
and Wisconsin);
\item Region 6 -- Dallas (Arkansas, Louisiana, New Mexico,
Oklahoma, and Texas); 
\item Region 7 -- Kansas City (Iowa, Kansas, Missouri,
and Nebraska);
\item Region 8 -- Denver (Colorado, Montana, North Dakota,
South Dakota, Utah, and Wyoming);
\item Region 9 -- San Francisco (Arizona,
California, Hawaii, Nevada);
\item Region 10 -- Seattle
(Alaska, Idaho, Oregon, and Washington).
\end{itemize}
We select the representative cities in the DHHS regions as our demand sites, which aggregate the demand in each region. Based on the acceptance rate estimates and sample sizes of the surveys conducted by \citet{malik2020determinants}, we compute a 90\% confidence interval of the acceptance rate for each region, depicted in Table \ref{tab:acceptance_rate}. We then multiply these values by the total adult population in each region to obtain lower and upper bounds on the number of people to be vaccinated. As each person needs to get two doses of vaccines, we multiply these bounds by 2 to determine the number of doses needed (i.e., potential demand). To represent the demand uncertainty, we sample scenarios following uniform distributions between the demand lower and upper bounds (i.e., $\underline{\mu}_{jt},\bar{\mu}_{jt}$). Table \ref{tab:demand_mean_vaccine} summarizes the mean values of the estimated demand during the three phases for each DHHS region. 

\begin{table}[ht!]
  \centering
  \caption{Confidence intervals of COVID-19 vaccines' acceptance rates in 10 DHHS regions}
  \resizebox{\textwidth}{!}{
    \begin{tabular}{lrrrrrrrrrr}
    \hline
    Regions & Region 1 & Region 2 & Region 3 & Region 4 & Region 5 & Region 6 & Region 7 & Region 8 & Region 9 & Region 10 \\
    \hline
    Acceptance & 68.06\% & 43.14\% & 72.04\% & 59.76\% & 39.13\% & 74.42\% & 72.22\% & 80.00\% & 67.61\% & 70.00\% \\
    90\% CI LB & 59.02\% & 31.73\% & 64.39\% & 50.85\% & 22.39\% & 68.95\% & 54.86\% & 65.29\% & 58.47\% & 60.99\% \\
    90\% CI UB & 77.09\% & 54.55\% & 79.70\% & 68.66\% & 55.87\% & 79.89\% & 89.59\% & 94.71\% & 76.74\% & 79.01\% \\
    \hline
    \end{tabular}%
    }
  \label{tab:acceptance_rate}%
\end{table}%

\begin{table}[ht!]
  \centering
  \caption{Estimated demand mean values during the three phases in 10 DHHS regions}
  \resizebox{\textwidth}{!}{
    \begin{tabular}{lrrrrrrrrrrr}
    \hline
    Regions & Region 1 & Region 2 & Region 3 & Region 4 & Region 5 & Region 6 & Region 7 & Region 8 & Region 9 & Region 10 & Total \\
    \hline
    Phase 1 & 1.0M  & 1.3M  & 2.2M  & 3.9M  & 2.0M  & 3.0M  & 970.0K & 920.4K & 3.3M  & 968.5K & 19.4M \\
    Phase 2 & 6.8M  & 9.0M  & 14.7M & 26.3M & 13.4M & 20.1M & 6.6M  & 6.3M  & 22.6M & 6.6M  & 132.4M \\
    Phase 3 & 8.4M  & 11.2M & 18.2M & 32.5M & 16.6M & 24.8M & 8.1M  & 7.7M  & 27.9M & 8.1M  & 163.5M \\
    \hline
    Total & 16M	& 22M &	35M &	63M &	32M	&48M&	16M	&15M&	54M&	16M	&316M\\
    \hline
    \end{tabular}%
    }
  \label{tab:demand_mean_vaccine}%
\end{table}%

As the sizes of populations to be vaccinated vary during different phases, we assume the lengths of the three phases to be 1, 2, and 3 months, respectively, with one period in our models being 2 weeks, and the demand of each phase is evenly distributed for each period. To determine candidate locations for siting DCs, we first consider the ones being used for vaccine production in the US currently. There are 5 DCs used by Pfizer-BioNTech and Moderna in the US. As these facilities are already available, we set their corresponding $\boldsymbol{x}$-values to 1 in all the models in the default case, and examine two more cases with this restriction being relaxed or strengthened later. We select representative cities of the 10 DHHS regions as potential locations to open additional DCs, and Table \ref{tab:locations} depicts all 15 DC locations and 10 demand locations. For each DC $i\in \mathcal{I}$, we calculate the capacity upper bound $M_i$ by assuming the daily maximum production to be 500,000, 750,000 and 1,000,000 doses for Phases 1, 2 and 3, respectively, as manufacturers will raise their daily production capacity as the demand increases. For each $t\in \mathcal{T}$, the temporal capacity of manufacturing vaccines $B_t$ is set as the sum of the maximum capacities over all DCs. We also examine the case when the supply chain of vaccines experiences certain disruptions and set the corresponding temporal capacities $B_t$ as 10\% of the normal capacities. The operating cost $c^o_i$ for each DC $i\in \mathcal{I}$ is estimated as 10000 times the local unit warehouse rental price per square foot, by assuming a standard warehouse is about 10000 square feet. The unit capacity cost $c^h_i$ is set as \$25 for each $i\in \mathcal I$, which is the average market price of one dose of COVID-19 vaccine. The unit inventory cost $c^I_{jt}$ is estimated as the sum of low-temperature inventory cost and energy cost of common refrigerators for storing vaccines, which is \$0.00008 for each $j\in \mathcal J,\ t\in \mathcal T$. The unit shipping cost $c^s_{ijt}$ for each region $j\in \mathcal J$ and supplier $i\in \mathcal I$ consists of two parts: The first part is the shipping cost for trucks calculated as \$3 per mile times the distance traveled from $i$ to $j$ in miles divided by 230,400, following the fact that each truck on average can carry 230,400 doses of Moderna vaccines \citep{link_moderna}, and the second part is the refrigerated trucks’ overall cost per liter of vaccine transported \citep{link_refrigated}.  Via sensitivity analysis of different penalty values for unit demand shortage, the penalty cost $c_{jt}^u$ of each unit of unsatisfied demand is set to \$100. The sources used for estimating these parameters are further summarized in Table \ref{tab:parameterSources} in the Appendix.

\begin{table}[ht!]
  \centering
  \caption{Locations of 10 customers sites, 15 candidate DCs and optimal solutions for Phase 1 given by Deterministic (DT), Stochastic (SP) and Distributionally Robust Optimization (DRO) approaches}
    \begin{tabular}{cccccc}
    \hline
    City & Customer sites & Status of candidate DCs & DT & SP & DRO\\
    \hline
    Kalamazoo, MI &    & already opened &  &  & \\
    Pleasant Prairie, WI &    & already opened & & \checkmark & \\
    Bloomington, IN &    & already opened & & & \checkmark\\
    Norwood, MA &    & already opened & \checkmark & \checkmark & \checkmark\\
    Saint Louis, MO &    & already opened & & \checkmark & \\
    Boston, MA & Region 1   & candidate & & & \\
    New York City, NY & Region 2   & candidate & \checkmark & \checkmark & \\
    Philadelphia, PA & Region 3  & candidate & \checkmark & & \checkmark\\
    Atlanta, GA &  Region 4  & candidate & \checkmark & \checkmark & \checkmark\\
    Chicago, IL &  Region 5   & candidate & \checkmark & & \checkmark\\
    Dallas, TX & Region 6   & candidate & \checkmark & \checkmark & \checkmark\\
    Kansas City, KS & Region 7   & candidate & \checkmark & & \checkmark\\
    Denver, CO & Region 8   & candidate & \checkmark & & \checkmark\\
    San Francisco, CA & Region 9  & candidate & \checkmark & \checkmark & \\
    Seattle, WA &  Region 10  & candidate & \checkmark & & \checkmark\\
    \hline
    \end{tabular}%
  \label{tab:locations}%
\end{table}%

\subsubsection{Sensitivity Analysis}\label{sec:sensitivity}
We employ DT, SP, and DRO approaches to optimize facility location and resource distribution for Phases 1, 2, and 3, based on the same in-sample data. At the default setting, we randomly sample 100 scenarios (i.e., $K=100$) for the in-sample computation from a uniform distribution with the upper and lower bounds calculated based on Table \ref{tab:acceptance_rate}. For DT, we set demand mean values $\mu_{jt}, \ j\in \mathcal{J},\ t\in\mathcal{T}$ as the empirical mean values of the 100 in-sample scenarios. For DRO, we set parameter $\epsilon^{\mu}_{jt}$ in  \eqref{eq:Type-1-Ambiguity-Special-firstMoment} as $\epsilon^{\mu}_{jt}=0.5\mu_{jt}$, and bounding parameters in  \eqref{eq:Type-1-Ambiguity-Special-secondMoment} as   $\underline{\epsilon}^{S}_{jt}=0.1,\ \bar{\epsilon}^{S}_{jt}=2$ for all $j\in \mathcal{J},\ t\in\mathcal{T}$. The SP approach uses all the 100 scenarios to form set $\Omega$ in the SMIP model \eqref{eq:TS-MILP} and the DRO approach uses the in-sample scenarios as its support set for computing Phase 1's solutions. We independently generate 1000 out-of-sample scenarios for evaluating solution performance of the three approaches. In the last three columns in Table \ref{tab:locations}, we display the DCs that are open (in addition to the existing 5 DCs) in the optimal solutions of the three approaches for Phase 1's problem. 

To evaluate the impact of data-generating distributions, we present the in-sample and out-of-sample performance of the three approaches using normal and uniform distributions in Table \ref{tab:unshifted}, where in the normal distributions, we set the mean values to $(\underline{\mu}_{jt}+\bar{\mu}_{jt})/2$ and set the standard deviation to $(\bar{\mu}_{jt}-\underline{\mu}_{jt})/6$, respectively. The in-sample and out-of-sample scenarios are generated according to the same distributions in Table \ref{tab:unshifted}. We also record the percentage changes of the out-of-sample cost compared to the in-sample cost in the bracket. In practice, it is possible that the true data follows a distribution that is different than what we assume and therefore, we generate some out-of-sample scenarios by shifting an assumed distribution to the right. (That is, the true distributions have mean values of $1.01\mu_{jt}$ with $\mu_{jt}$ being the mean values of the assumed distributions.) The corresponding results are summarized in Table \ref{tab:shifted}. 

\begin{table}[ht!]
  \centering
  \caption{In-sample and out-of-sample performance comparison of the three approaches using different demand distribution assumptions and unshifted mean values.}
    \begin{tabular}{ccrrrr}
    \hline
    \multirow{2}[0]{*}{Distribution} & \multirow{2}[0]{*}{Approach} & \multicolumn{2}{c}{Unmet Demand} & \multicolumn{2}{c}{Overall Cost} \\
          &       & \multicolumn{1}{c}{In-sample} & \multicolumn{1}{c}{Out-of-sample} & \multicolumn{1}{c}{In-sample} & \multicolumn{1}{c}{Out-of-sample} \\
          \hline
    \multirow{3}[0]{*}{Normal} & DT    & 0     & 20K & 97M & 100M (+2.15\%)\\
          & SP    & 3K  & 3K  & 98M & 98M (+0.02\%)\\
          & DRO   & 0     & 34    & 99M & 99M (+0.003\%)\\
    \multirow{3}[0]{*}{Uniform} & DT    & 0     & 294K & 489M & 518M (+6.02\%)\\
          & SP    & 72K & 73K & 504M & 504M (+0.02\%)\\
          & DRO   & 6     & 752   & 520M & 520M (+0.01\%)\\
          \hline
    \end{tabular}%
  \label{tab:unshifted}%
\end{table}%

\begin{table}[ht!]
  \centering
  \caption{In-sample and out-of-sample performance comparison of the three approaches using different demand distribution assumptions and shifted mean values.}
    \begin{tabular}{ccrrrr}
    \hline
    \multirow{2}[0]{*}{Distribution} & \multirow{2}[0]{*}{Approach} & \multicolumn{2}{c}{Unmet Demand} & \multicolumn{2}{c}{Overall Cost} \\
          &       & \multicolumn{1}{c}{In-sample} & \multicolumn{1}{c}{Out-of-sample} & \multicolumn{1}{c}{In-sample} & \multicolumn{1}{c}{Out-of-sample} \\
          \hline
    \multirow{3}[0]{*}{Normal} & DT    & 0     & 29K & 97M & 100M (+3.01\%) \\
          & SP    & 3K  & 5K  & 98M & 99M (+0.21\%) \\
          & DRO   & 1     & 28    & 99M & 99M (+0.003\%)\\
    \multirow{3}[0]{*}{Uniform} & DT    & 0     & 3M & 489M & 767M (+56.89\%)\\
          & SP    & 72K & 2M & 504M & 656M (+30.21\%) \\
          & DRO   & 6     & 784K & 520M & 599M (+15.08\%)\\
          \hline
    \end{tabular}%
  \label{tab:shifted}%
\end{table}%
From Table \ref{tab:unshifted}, the gaps between the out-of-sample and in-sample objective costs given by the DRO approach are the smallest across all three approaches and are always within 0.01\%, while the DT approach attains the highest gaps at 6.02\% when assuming uniformly distributed demand. These gaps further increase in Table \ref{tab:shifted} due to the inconsistency of assumed and true distributions. Notably, even with shifted means, the DRO approach can still achieve an objective cost gap of 0.003\% when assuming Normal distributed demand, showing the result robustness when having distributional ambiguity. Comparing the two demand distribution assumptions, following the Normal distribution generates much smaller in-sample and out-of-sample gaps. This is mainly because most of the scenarios generated by the Normal distribution are centered around the mean, and even with shifted means, the in-sample and out-of-sample scenarios are close to each other. However, when using the Uniform distribution to generate demand samples, scenarios are scattered between lower and upper bounds uniformly, and shifting the bounds can change scenarios significantly. Because of that, 
we only assume Normal distribution as the true demand distribution in the testing kit distribution problem in Section \ref{sec:TestkitCaseStudy}.

Next, we vary the number of in-sample scenarios $K$ from 10 to 100, and present the overall cost of the DRO approach on the same 1000 out-of-sample scenarios and the computational time comparison in Figure \ref{fig:VaryingK}. We also compare two different ambiguity sets in the DRO approach, where in ``First$+$Second Moment'', we use both first and second moments' information \eqref{eq:Type-1-Ambiguity-Special-firstMoment}--\eqref{eq:Type-1-Ambiguity-Special-secondMoment} to construct the ambiguity set, and in ``First Moment'', we only use the first moment's information \eqref{eq:Type-1-Ambiguity-Special-firstMoment}. 
\begin{figure}[ht!]
    \centering
    \begin{subfigure}[c]{.45\linewidth}
    \centering\includegraphics[width=\textwidth]{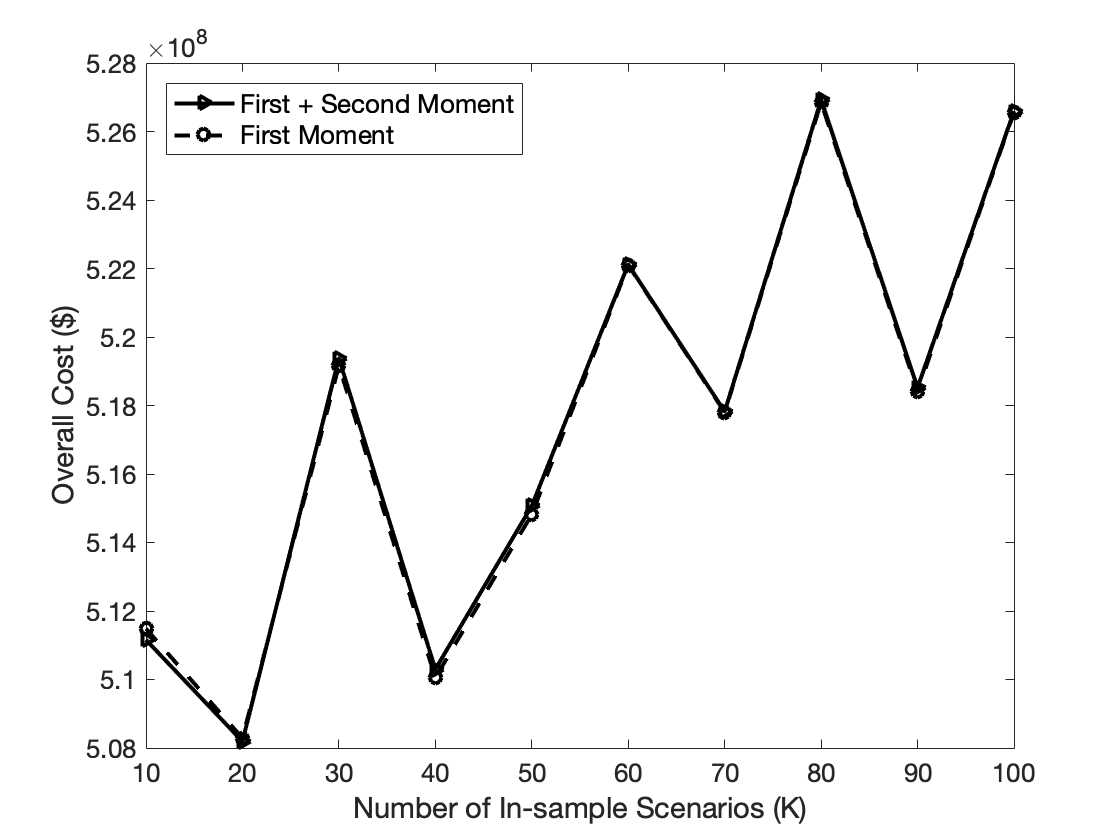}
    \caption{Out-of-sample overall cost comparison}\label{fig:VaryingK-cost}
    \end{subfigure}%
    \begin{subfigure}[c]{.45\linewidth}
    \centering\includegraphics[width=\textwidth]{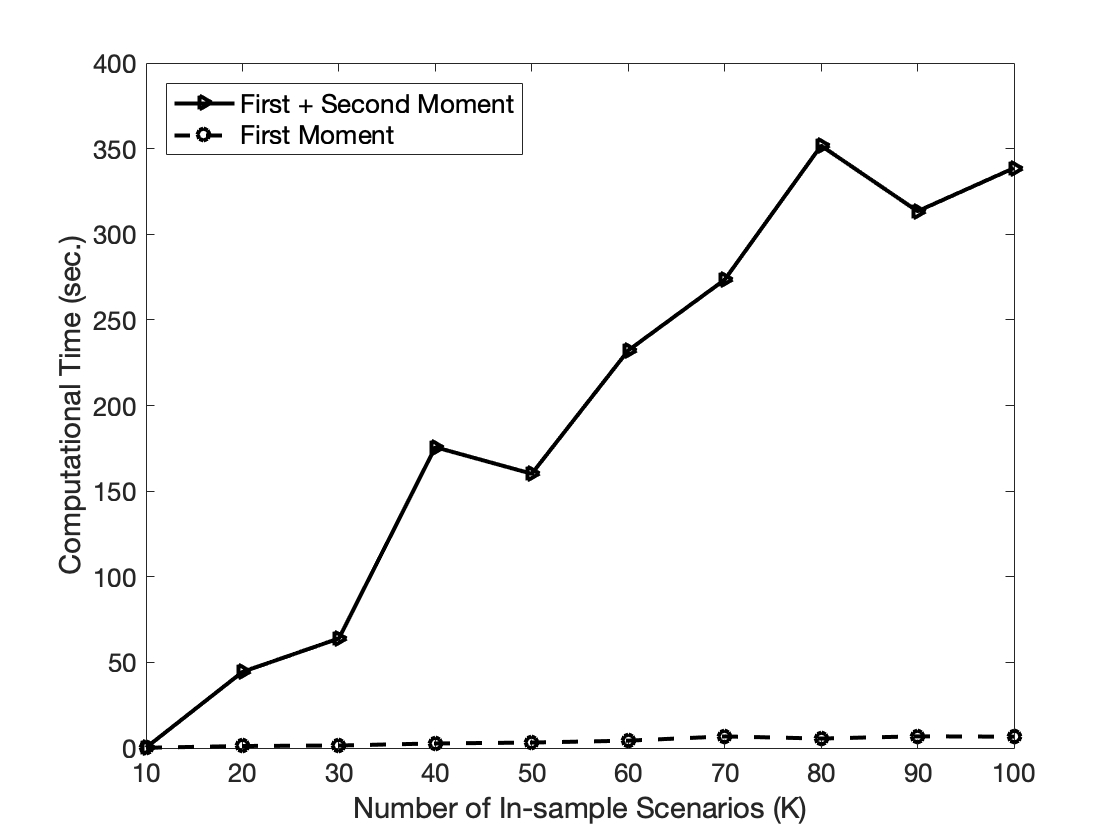}
    \caption{Computational time comparison}\label{fig:VaryingK-time}
    \end{subfigure}
\caption{Out-of-sample cost and CPU time of the DRO approach with varying $K$ over different ambiguity sets.}\label{fig:VaryingK}
\end{figure}
From Figure \ref{fig:VaryingK}, using only the first moment can obtain nearly the same out-of-sample performance as using both first and second moment information, and it scales better with the number of in-sample scenarios $K$. As we include more in-sample scenarios in the discrete support of the DRO approach, there are more choices for the inner-max problem to select the worst-case distribution and thus we become more conservative. Although the out-of-sample cost is fluctuating due to the differences between out-of-sample and in-sample scenarios, we can still see an increasing trend from Figure \ref{fig:VaryingK-cost}. In practice, to trade off between the computational complexity and conservatism, one can choose a reasonable $K$ between 10 and 100. In our test for solving Phases 2 and 3's problems, we set $K$ to 10 for DRO approach to produce solutions within a reasonable time limit.

We also examine the impact of the bounding parameters in the DRO approach, where we present the out-of-sample performance with varying $\epsilon_{\mu}$ in Figure \ref{fig:VaryingEpsilon-mu}, and the out-of-sample performance with varying $\underline{\epsilon}_S$ in Figure \ref{fig:VaryingEpsilon-S}, respectively.
\begin{figure}[ht!]
    \centering
    \begin{subfigure}[c]{.45\linewidth}
    \centering\includegraphics[width=\textwidth]{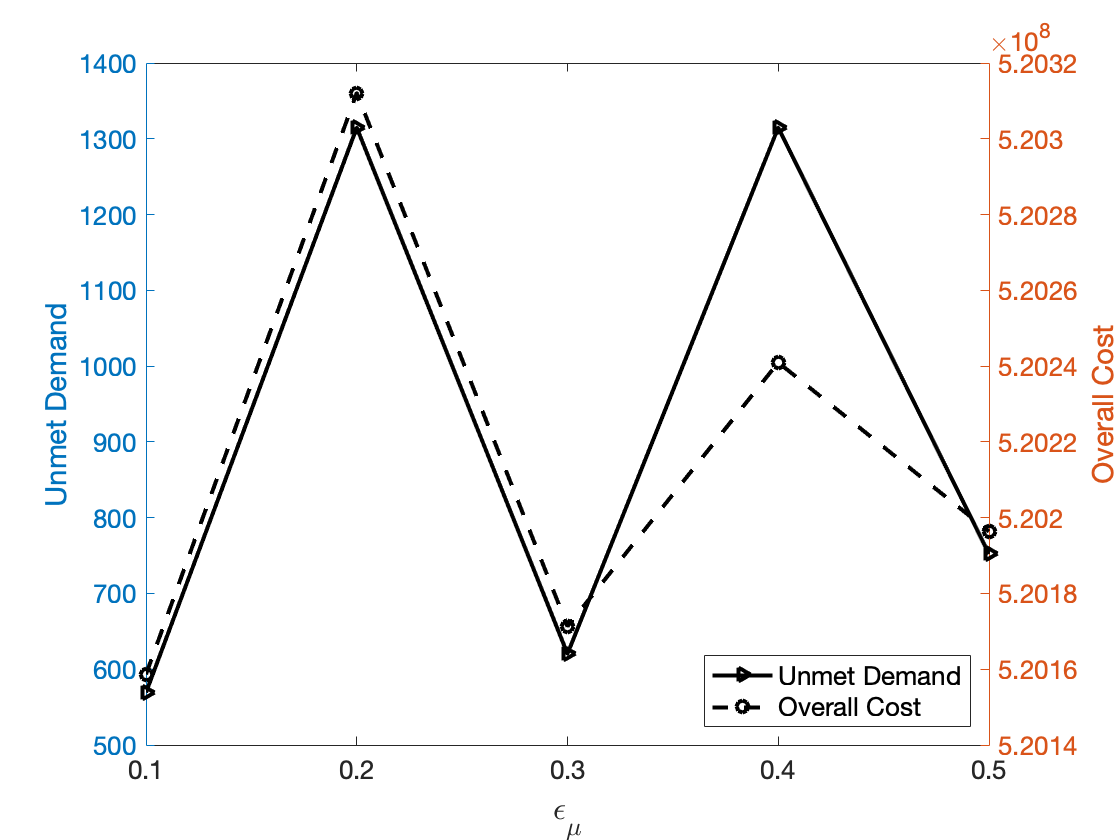}
    \caption{Results with varying $\epsilon_{\mu}$}\label{fig:VaryingEpsilon-mu}
    \end{subfigure}%
    \begin{subfigure}[c]{.45\linewidth}
    \centering\includegraphics[width=\textwidth]{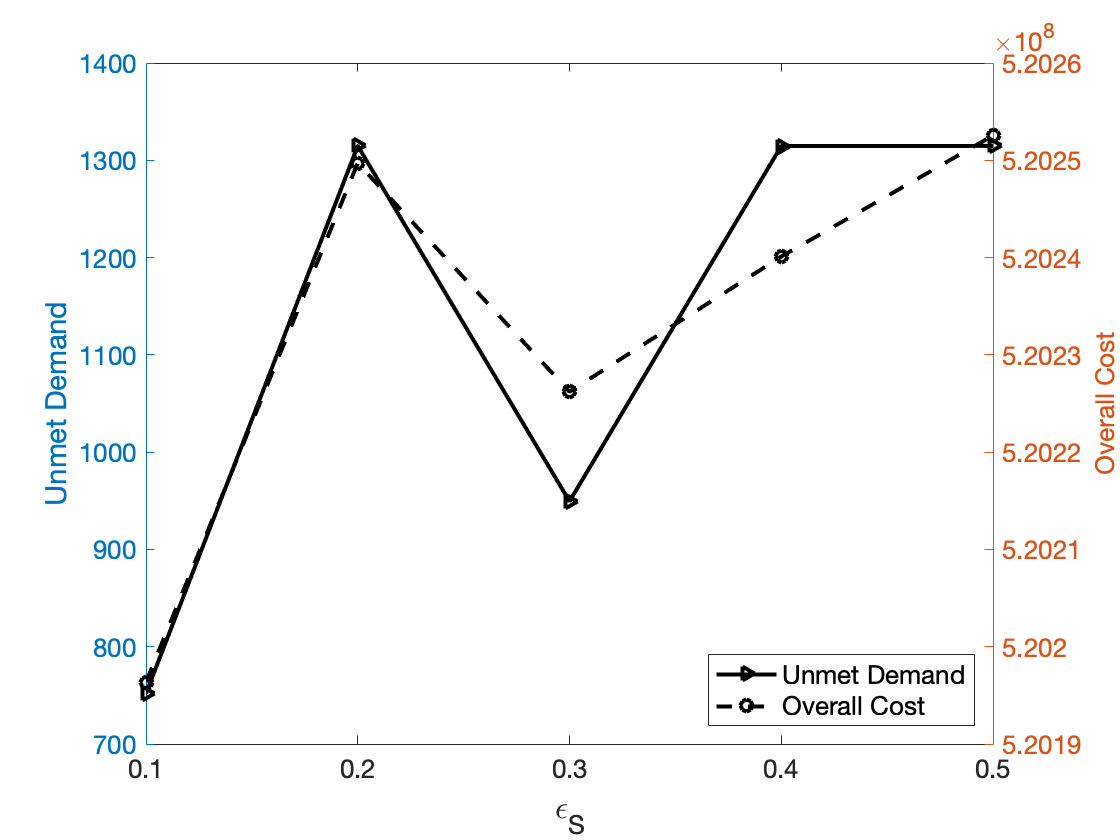}
    \caption{Results with varying $\epsilon_{S}$}\label{fig:VaryingEpsilon-S}
    \end{subfigure}
\caption{Out-of-sample cost and unmet demand changes with varying $\epsilon_{\mu}$ and $\epsilon_S$.}\label{fig:VaryingEpsilon}
\end{figure}
From Figure \ref{fig:VaryingEpsilon}, the out-of-sample cost is not very sensitive to both bounding parameters, as the percentage changes are all within $0.03\%$. When we increase $\underline{\epsilon}_S$, there is an increasing trend in both unmet demand and overall cost because we enlarge the ambiguity set and the results become more conservative.

\subsubsection{Results}

For solving problems in Phases 2 and 3, in order to obtain solutions for the DRO approach within a reasonable time limit (e.g., 24 hours), we reduce the number of in-sample scenarios to 10 to form the support set. Also, in our in-sample computation, we assume that the supply of vaccines matches the demand and therefore try to minimize the unmet demand only against its uncertainty rather than also taking into account resource scarcity. We present the unsatisfied demand and overall cost of different approaches under full and scarce resources (the latter being 10\% of the former) in Table \ref{tab:vaccine_summary}, where we mark the percentage increase compared with the optimal one in parentheses. The last column in Table \ref{tab:vaccine_summary} records the in-sample computational time in seconds for each method.

\begin{table}[ht!]
  \centering
     \caption{Out-of-sample performance of different approaches in terms of unmet demand and overall cost for Phases 1, 2, 3 of vaccine distribution in the US.}
     \resizebox{\textwidth}{!}{
    \begin{tabular}{cc|rr|rr|r}
    \hline
    \multirow{2}[0]{*}{Phases} & \multirow{2}[0]{*}{Approach} & \multicolumn{2}{c|}{Unmet Demand} & \multicolumn{2}{c|}{Overall Cost} & \multirow{2}[0]{*}{Time (sec.)}\\
          &       & \multicolumn{1}{r}{{Full Resource}} & \multicolumn{1}{r|}{{Scarce Resource}} & \multicolumn{1}{r}{{Full Resource}} & \multicolumn{1}{r|}{{Scarce Resource}} &  \\
          \hline
    \multirow{3}[0]{*}{Phase 1} & DT & {372K ($+1.26\times 10^4\%$)}     & {372K ($+5.31\times 10^3\%$)}     & {\$524M (+4.11\%)}     & {\$524M (+4.04\%)} & 0.1\\
          & SP    & {64K ($+2.11\times 10^{3}\%$)}     & {68K ($+8.85\times 10^2\%)$}     & {\$503M}     & {\$503M} & 6.5\\
          & DRO   & {3K}     & {7K}     & {\$516M (+2.50\%)}     & {\$513M (+1.95\%)} & 2148.8\\
          \hline
    \multirow{3}[0]{*}{Phase 2} & DT & {2M ($+2.37\times 10^3\%$)}    & {173M}     & {\$4B (+4.42\%)}     & {\$19B ($+8.13\times10^{-5}\%$)} & 0.1\\
          & SP    & {166K ($+77\%)$}     & {173M}     & {\$3B}     & {\$19B} & 1.3\\
          & DRO   & {94K}     & {173M}     & {\$3B (+0.40\%)}     & {\$19B ($+4.67\times10^{-5}\%$)} & 903.2\\
          \hline
    \multirow{3}[0]{*}{Phase 3} & DT & {3M ($+7.38\times 10^2\%$)}     & {131M}     & {\$4B (+4.40\%)}     & {\$16B ($+4.23\times10^{-4}\%$)} & 0.2\\
          & SP    & {313K}     & {131M}     & {\$4B}      & {\$16B} & 2.5 \\
          & DRO   & {450K (+43\%)}    & {131M}    & {\$4B (+0.39\%)}      & {\$16B ($+7.43\times10^{-4}\%$)} & 907.2\\
          \hline
    \end{tabular}%
    }
  \label{tab:vaccine_summary}%
\end{table}%

From Table \ref{tab:vaccine_summary}, in Phase 1, DRO obtains the least amount of unsatisfied demand and the second highest cost overall, while DT performs the worst in terms of both demand satisfaction and overall cost. In terms of the overall cost, SP always outperforms the other two approaches. This is because in the ambiguity set \eqref{eq:Type-1-Ambiguity-Special}, we allow deviations from the predictive results $\mu_{jt},\ S_{jt}$ and the worst-case scenario usually achieves a demand mean higher than the empirical mean $\mu_{jt}$. Compared to SP model, DRO always produces a higher manufacturing capacity against the worst-case scenario (as can be seen from Tables \ref{tab:vaccineCosts-ample}--\ref{tab:vaccineCosts-scarce}) and thus leads to a higher overall cost but lower unsatisfied demand. When facing scarce resource (i.e., 10\% of the regular capacity), the results do not change significantly in Phase 1 because all three approaches do not use up all the resources. However, when the resource capacity becomes tight (e.g., in Phases 2 and 3), the differences between the three approaches are almost negligible as there is not much flexibility in making different production and shipment plans due to limited temporal resources. We note that the raw cost breakdowns of each approach are detailed in Tables \ref{tab:vaccineCosts-ample} and \ref{tab:vaccineCosts-scarce} in the Appendix. 
Next, we focus on the analysis of optimal solutions of different methods under the setting of full resources.  

\begin{figure}[ht!]
    \centering
    \includegraphics[width=\textwidth]{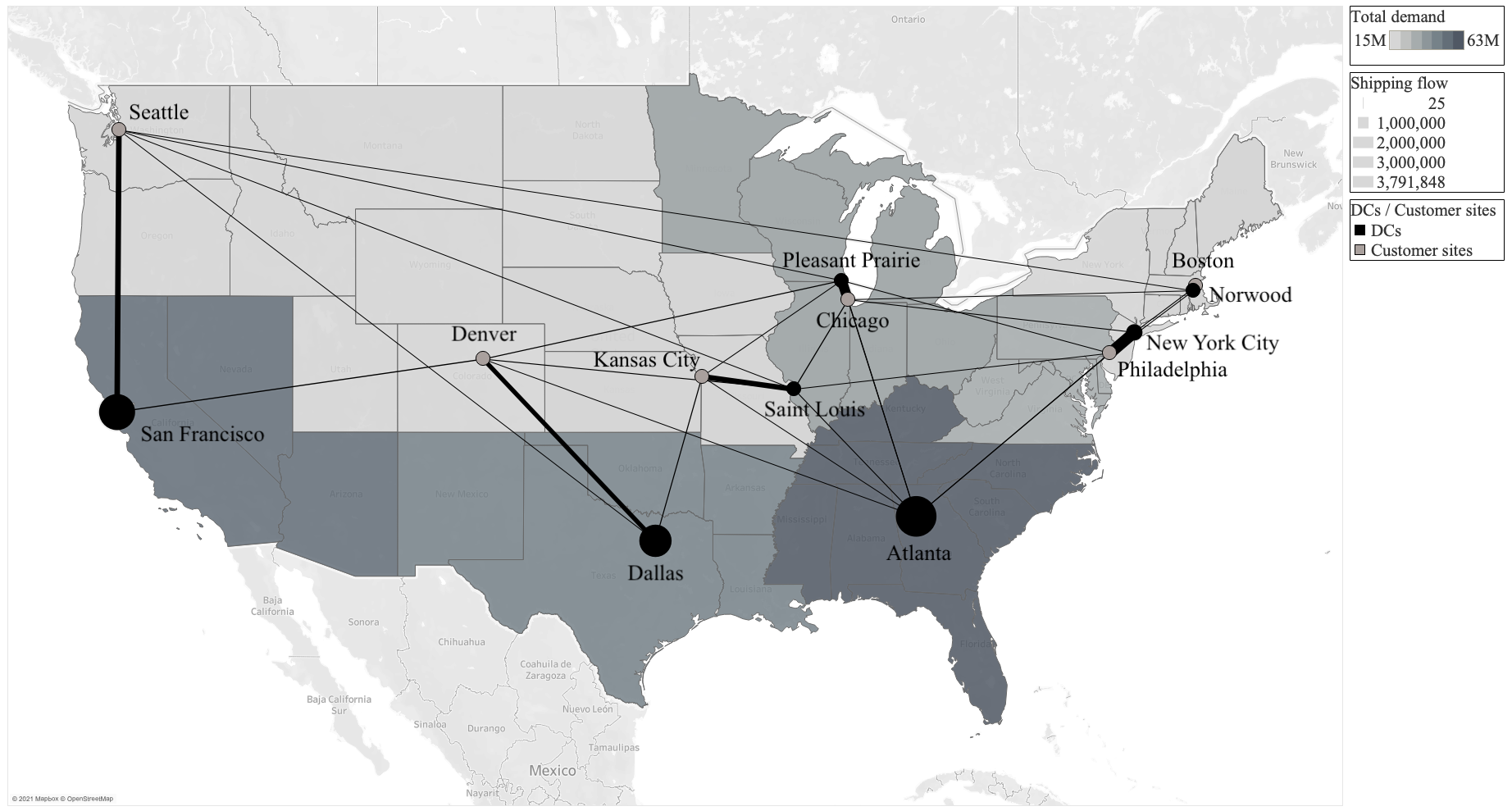}
    \caption{Average shipments of Phase 1's vaccines in 1000 out-of-sample scenarios given by the optimal solution of the SP approach. The background color of 10 DHHS regions indicates the total demand across all three phases where darker color represents higher demand volume. The black and gray circles stand for open DCs and demand sites, respectively, while DCs located in New York City, Atlanta, Dallas and San Francisco are also demand sites themselves and the sizes of these black circles represent the amount of vaccines they produce for satisfying their own demand. The black lines represent shipments from open DCs to demand locations with wider lines meaning higher volumes.}
    \label{fig:vaccine_flow_phase1_sp}
\end{figure}

\begin{figure}[ht!]
    \centering
    \includegraphics[width=\textwidth]{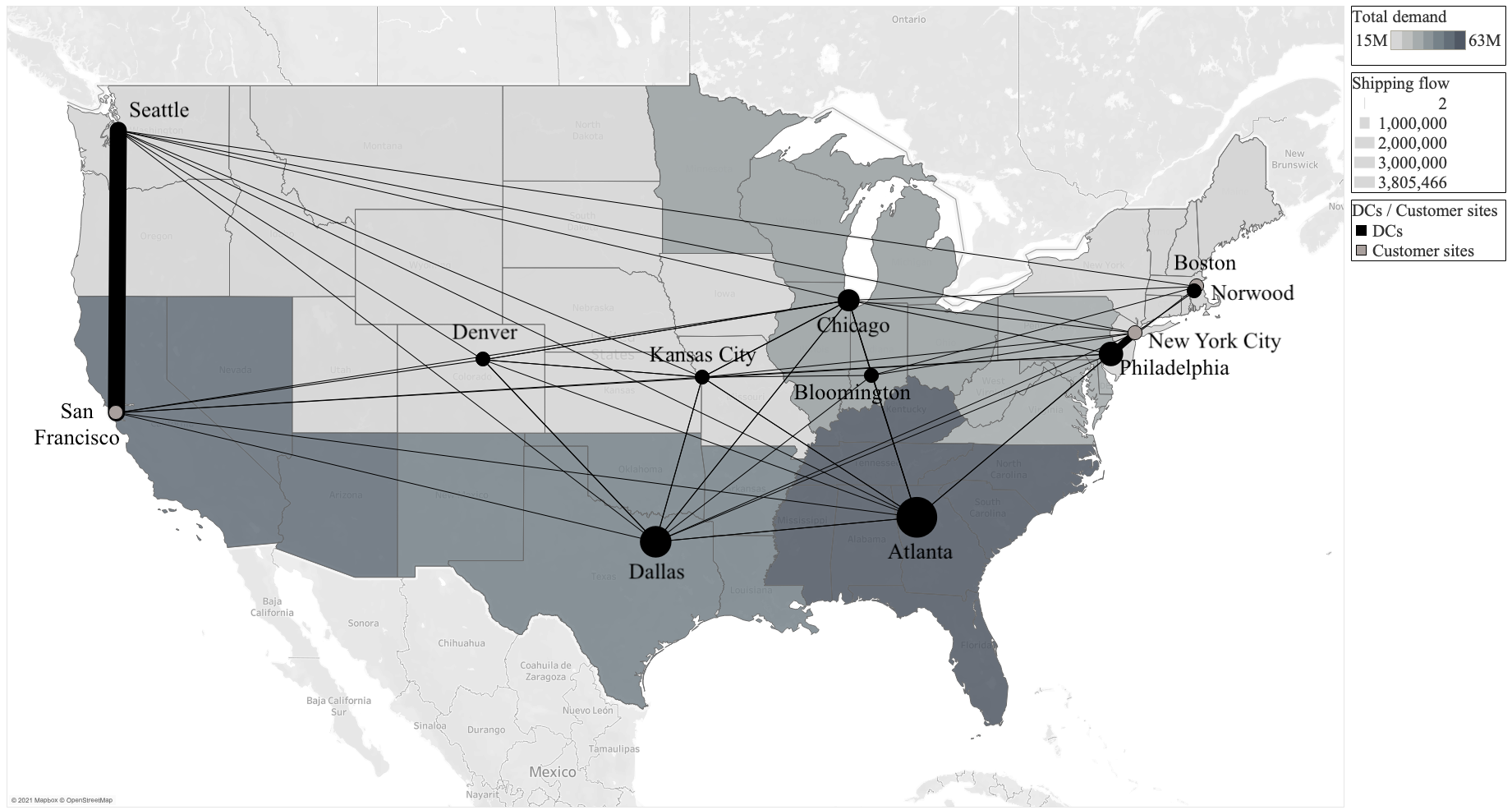}
    \caption{Average shipments of Phase 1's vaccines in 1000 out-of-sample scenarios given by the optimal solutions of the DRO approach. The background color of 10 DHHS regions indicates the total demand across all three phases where darker color represents higher demand volume. The black and gray circles stand for open DCs and customer sites, respectively, while DCs located in Philadelphia, Chicago, Atlanta, Dallas, Kansas City, Denver and Seattle are also demand sites themselves and the size of these black circles represent the amount of vaccines they produced for satisfying their own demand. The black lines represent shipments from open DCs to demand locations with wider lines meaning higher volumes. }
    \label{fig:vaccine_flow_phase1_dro}
\end{figure}

The average shipments in the optimal solutions of SP and DRO for Phase 1's vaccine distribution are presented in Figures \ref{fig:vaccine_flow_phase1_sp} and \ref{fig:vaccine_flow_phase1_dro}, respectively. From Figure \ref{fig:vaccine_flow_phase1_sp}, seven DCs are selected in the optimal solution, and among them four are in regions with high demand volumes, and three are from the existing Pfizer-BioNTech and Moderna open DCs. On the other hand, the other two existing DCs do not produce or ship any vaccines either because they are too close to some other open DCs in the optimal solution or they reside in the areas having low demand volumes. Moreover, the top five largest shipments are from DCs to their nearest demand sites (i.e., New York City to Philadelphia, Pleasant Prairie to Chicago, Saint Louis to Kansas City, Dallas to Denver, and San Francisco to Seattle), and each requires up to five trucks if fully occupied and the delivery time will be all within two days. All the other longer-distance shipments are in smaller volumes, requiring only one truck. For example, the longest-distance shipment is from Norwood to Seattle, which may take 4 to 5 days, but it only carries 25 doses of vaccines. Comparing Figure \ref{fig:vaccine_flow_phase1_dro} with Figure \ref{fig:vaccine_flow_phase1_sp}, DRO selects more DCs to open, among them only two are existing open ones. The number of shipment routes also increases significantly. As San Francisco is not an open DC anymore, the largest shipment occurs between Seattle and San Francisco and requires 8 trucks and 2 days to transit. The second and third largest shipments are from Philadelphia to New York City and Norwood to Boston, respectively, both requiring 3 trucks and 1 day. 

Furthermore, we present Phase 1's unsatisfied demand mean, standard deviation, and 75 to 95 percentile values in the 1000 out-of-sample scenarios according to the three approaches in Figure \ref{fig:UD_vaccine_measures}, and average unsatisfied demand percentages in each region in Figure \ref{fig:regional_US_vaccine}, respectively. From Figure \ref{fig:UD_vaccine_phase1}, DRO attains the least amount of unsatisfied demand while DT performs the worst. Comparing the results in different regions, Region 5 -- Chicago does not meet over 4\% of the demand in DT approach, and Region 8 -- Denver does not cover about 4\% of the demand in SP approach, both of which have relatively low demand volumes according to Table \ref{tab:demand_mean_vaccine}. 

\begin{figure}[ht!]
    \centering
    \begin{subfigure}[c]{.6\linewidth}
    \centering\includegraphics[width=\textwidth]{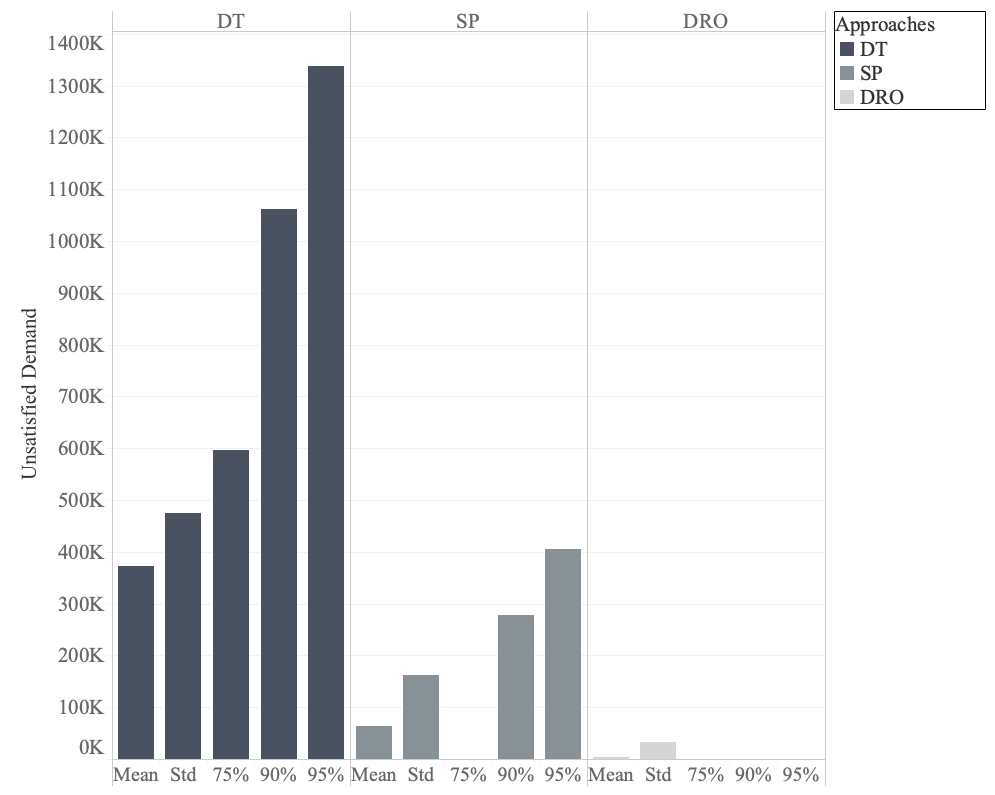}
    \caption{Unsatisfied demand statistics}\label{fig:UD_vaccine_measures}
    \end{subfigure}%
    \begin{subfigure}[c]{.4\linewidth}
    \centering\includegraphics[width=\textwidth]{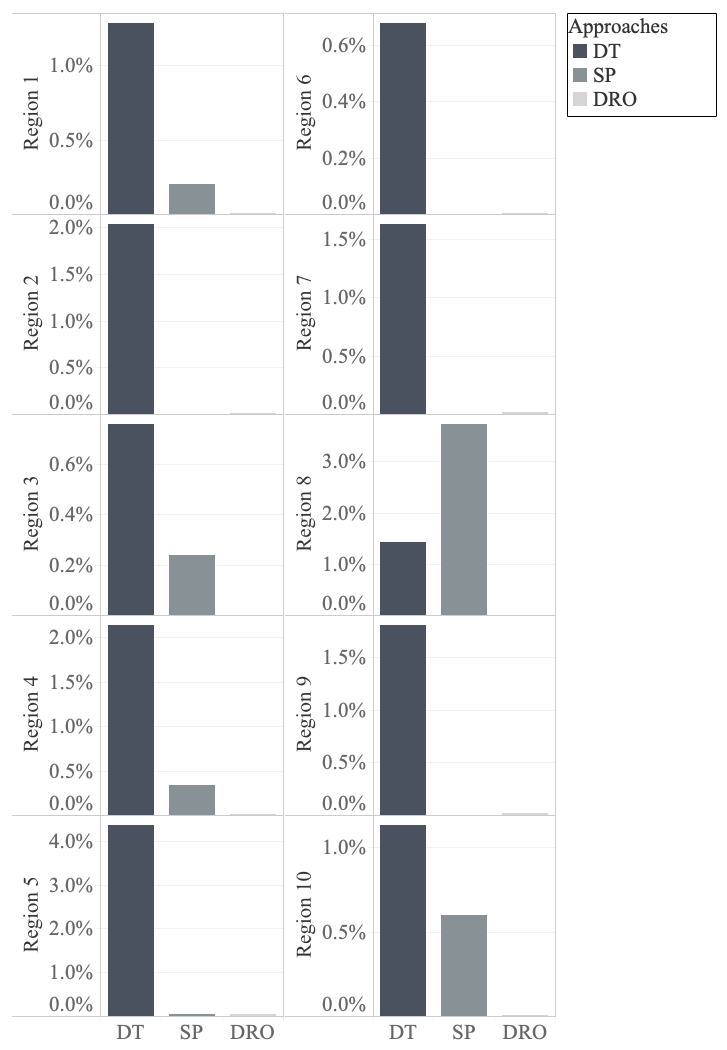}
    \caption{Regional unsatisfied demand}\label{fig:regional_US_vaccine}
    \end{subfigure}
\caption{Unsatisfied demand given by vaccine distribution in Phase 1 of different approaches.}\label{fig:UD_vaccine_phase1}
\end{figure}

In the above results, we assume that the existing 5 open DCs for Pfizer-BioNTech and Moderna have to be used but we can open additional ones if necessary. Next we examine two other cases where we either relax this constraint and allow to open facilities in any DC location options, or only open the 5 DCs. We refer to these two cases as the ``best-case'' and ``most-restrictive'' settings, respectively, and compare their Phase 2's results with the default setting in Table \ref{tab:vaccine-DC-settings}. In Column ``Overall Cost'', we display the total cost for each approach and present their percentage changes from their corresponding costs in the default setting. The last column records the total number of DCs open by each approach.

\begin{table}[ht!]
  \centering
  \caption{Out-of-sample performance of different approaches for Phase 2 of vaccine distribution in the US under different DC settings.}
  \resizebox{\textwidth}{!}{
    \begin{tabular}{ccrrrrrrr}
    \hline
    DC Settings & Model & Operating & Capacity & Shipping & Inventory & Penalty & Overall Cost & \# DCs \\
    \hline
    \multirow{3}[0]{*}{Default} & DT    & \$194K & \$3B & \$145K & \$259 & \$231M & \$4B & 14 \\
          & SP    & \$175K & \$3B & \$126K & \$752 & \$17M & \$3B & 13 \\
          & DRO   & \$107K & \$3B & \$545K & \$519 & \$9M & \$3B & 8 \\
          \hline
    \multirow{3}[0]{*}{Best-case} & DT    & \$154K & \$3B & \$142K & \$260 & \$231M & \$4B ($-1.21\times 10^{-3}\%$) & 10 \\
          & SP    & \$154K & \$3B & \$118K & \$744 & \$17M & \$3B (+0.01\%) & 10 \\
          & DRO   & \$204K & \$3B & \$473K & \$3K & \$7M & \$3B ($-0.08\%$) & 14 \\
          \hline
    \multirow{3}[0]{*}{Most-restrictive} & DT    & \$60K & \$3B & \$1M & \$143 & \$272M & \$4B (+1.19\%) & 5 \\
          & SP    & \$60K & \$3B & \$1M & \$522 & \$18M & \$3B (+0.06\%) & 5 \\
          & DRO   & \$60K & \$3B & \$1M & \$455 & \$10M & \$3B (+0.03\%) & 5 \\
          \hline
    \end{tabular}%
    }
  \label{tab:vaccine-DC-settings}%
\end{table}%

From Table \ref{tab:vaccine-DC-settings}, after relaxing the constraints on the existing 5 open DCs for Pfizer-BioNTech and Moderna, both DT and SP reduce the operational and shipping cost in the best-case setting due to better options of DCs to open. Moreover, the number of open DCs increases from 8 to 14 in the DRO approach because it has more freedom to choose DCs. In the best-case setting, DRO opens more DCs than the other two benchmarks as the optimal solution is chosen against the worst-case scenario where the demand mean is higher than the empirical one. 
Comparing the most-restrictive setting where only the existing 5 DCs are open with the default setting, all approaches have  increased shipping, penalty and overall costs.

Next, we vary the resource scarcity level between 10\% and 30\% of the full capacity and present the out-of-sample performance of different approaches for Phase 2 in Table \ref{tab:vaccine-scarcity}. From the table, the results with 30\% scarcity level are similar to the ones with full capacity. When the scarcity level decreases, the capacity cost drops while penalty and overall costs increase drastically. Moreover, results of the three approaches are more similar given low capacity.
\begin{table}[ht!]
  \centering
  \caption{Out-of-sample performance of different approaches for Phase 2 of vaccine distribution in the US under different resource scarcity levels.}
    \begin{tabular}{ccrrrrrr}
    \hline
    Scarcity & Model & Operating & Capacity & Shipping & Inventory & Penalty & Overall Cost \\
    \hline
    \multirow{3}[0]{*}{30\%} & DT    & \$194K & \$3B & \$145K & \$260 & \$231M & \$4B \\
          & SP    & \$175K & \$3B & \$126K & \$752 & \$17M & \$3B \\
          & DRO   & \$131K & \$3B & \$419K & \$536 & \$10M & \$3B \\
          \hline
    \multirow{3}[0]{*}{20\%} & DT    & \$194K & \$3B & \$150K & \$15  & \$2B & \$5B \\
          & SP    & \$175K & \$3B & \$117K & \$14  & \$2B & \$5B \\
          & DRO   & \$178K & \$3B & \$159K & \$12  & \$2B & \$5B \\
          \hline
    \multirow{3}[0]{*}{10\%} & DT    & \$105K & \$2B & \$48K & \$0   & \$17B & \$19B \\
          & SP    & \$96K & \$2B & \$42K & \$0   & \$17B & \$19B \\
          & DRO   & \$105K & \$1B & \$41K & \$0   & \$17B & \$19B \\
          \hline
    \end{tabular}%
  \label{tab:vaccine-scarcity}%
\end{table}%

We also compare the vaccine allocation results given by the SP and DRO approaches with the current vaccination distribution status reported by CDC \citep{link_CDCDataTracker}. From CDC's website \citep{link_CDCPfizer, link_CDCModerna}, Pfizer-BioNTech has started to provide first-dose vaccines since December 14, 2020, while Moderna started to ship first-dose vaccines on December 21, 2020. Consequently, we let December 14, 2020 be the starting date of our planning horizon and denote December 14, 2020 -- January 11, 2021 as Phase 1, January 11, 2021 -- March 8, 2021 as Phase 2, and March 8, 2021 -- May 31, 2021 as Phase 3. We report the total number of doses that our SP and DRO approaches have distributed and CDC has delivered/administered by each checkpoint in Table \ref{tab:doses_comparison} as of September 27, 2021. We note that both of our SP and DRO models finish distributing vaccines by the end of Phase 3 (i.e., May 31, 2021) and as a result, the total number of doses distributed until September 27, 2021 remains the same with the one until May 31, 2021, while CDC still distributes vaccines during this period.

\begin{table}[ht!]
  \centering
  \caption{Total number of doses distributed in the optimal solutions of SP/DRO and CDC's delivered and administered data for each phase as of September 27, 2021}
    \begin{tabular}{lrrrr}
    \hline
    Approaches & \multicolumn{1}{l}{Until 1/11/2021} & \multicolumn{1}{l}{Until 3/8/2021} & \multicolumn{1}{l}{Until 5/31/2021} & \multicolumn{1}{l}{Until 9/27/2021} \\
    \hline
    SP Distributed   & 20M & 155M & 321M & 321M \\
    DRO Distributed  & 21M & 157M & 322M & 322M \\
    CDC Delivered   & 24M & 116M & 366M & 472M \\
    CDC Administered & 9M & 92M & 296M & 391M \\
    \hline
    \end{tabular}%
  \label{tab:doses_comparison}%
\end{table}%

From Table \ref{tab:doses_comparison}, compared with SP and DRO approaches, CDC allocates much more vaccine doses in Phase 1 and but fewer in Phase 2. By the end of Phase 3, the SP and DRO approaches have distributed about 321 millions of doses, which are much closer to the real demand (i.e., 296M) as reported by CDC's administered data. As of September 27, 2021, CDC delivered 472 millions of doses while the total number of doses received by all people in the US is only 391M, leaving about 81 millions of doses unused, becoming either disposal or inventory. Both of the SP and DRO approaches are demand-driven, as they can improve CDC vaccine allocation plans by allocating more vaccines to appropriate regions when demand is high, or by reducing allocation amounts to avoid unnecessary shipping and inventory cost when demand is low.

\subsection{COVID-19 Test Kit Allocation in Michigan, USA}
\label{sec:TestkitCaseStudy}

\subsubsection{Experimental Design and Setup}

To demonstrate that the presented generic approaches are well-suited for distributing various types of resources in different geographic scales, in this section we consider the COVID-19 test kit resource allocation problem in the State of Michigan, and divide the state into eight emergency preparedness regions according to Figure \ref{fig:MI-regions} and consider aggregated demand in these regions. (Regions 2N, 2S, 3 are renamed as Regions 2, 3 and 4 for notation simplicity in the case study.) We pick 29 test-kit suppliers in Michigan as candidate locations for setting up DCs.

\begin{figure}[ht!]
	\centering
	\includegraphics[width=0.5\textwidth]{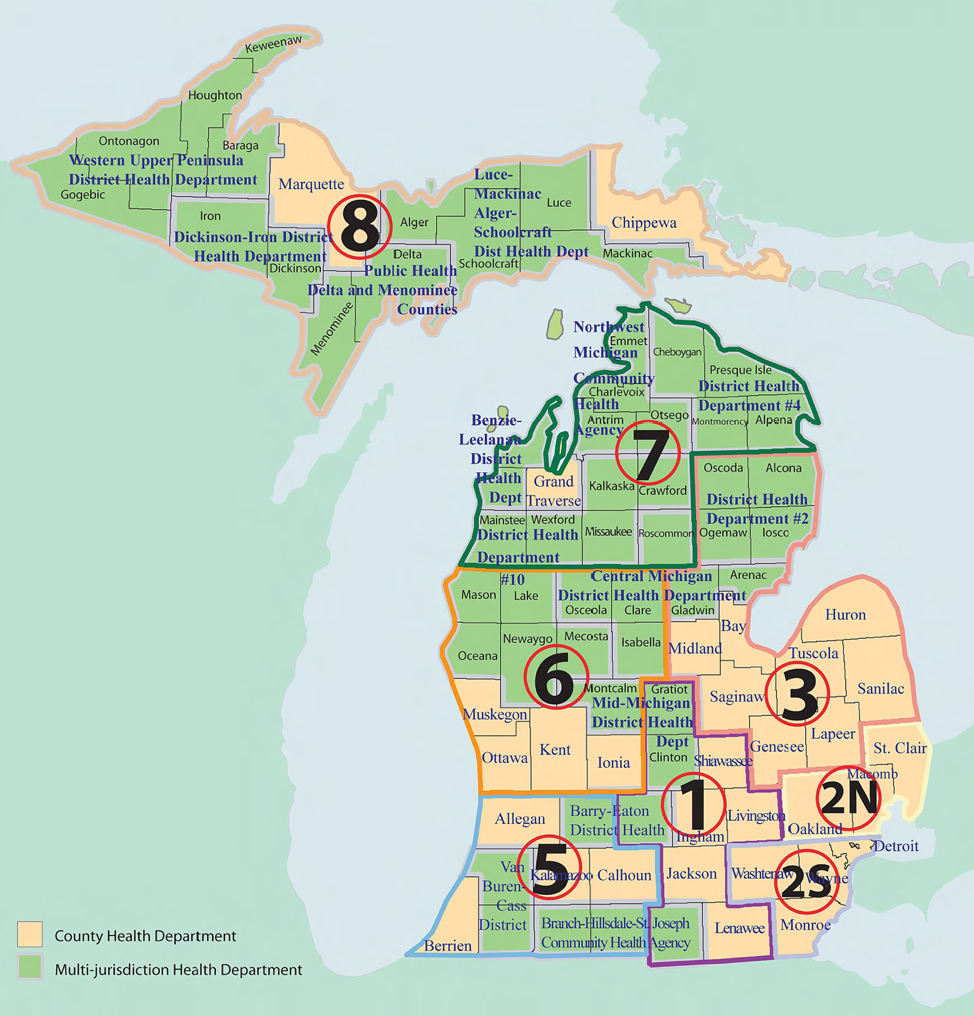}
\caption{Michigan Emergency Preparedness Regions (source: Michigan state government).}
\label{fig:MI-regions}
\end{figure}

For predicting demand and generating uncertain scenarios, 
we mainly follow \citet{pei2020initial}, in which the authors estimate the daily new county-level confirmed cases in the US based on a meta-population Susceptible-Exposed-Infectious-Removed (SEIR) model. The demand projections are reported in the 2.5, 25, 50, 75, 97.5 percentiles for each county everyday, and are regularly updated to capture different trends in virus transmission and intervention policies during the pandemic. To identify the demand of each emergency preparedness region, we first calculate the percentiles of the total projected confirmed cases, and divide them by our targeted positivity rate (i.e., 5\%), meaning that we aim to test 100 people if there are 5 confirmed cases. Having obtained the percentiles of the projected demand, we extract the mean and variance according to \cite{wan2014estimating} by assuming the 2.5 and 97.5 percentile values being lower and upper bounds of the support of the underlying random demand for estimation. 

We consider different planning horizons to represent various phases of virus transmission in Michigan --
the number of infected cases reached the first peak in April, 2020; it was then controlled in June, 2020 with the regulations from governor's executive orders and closures of businesses and public services; around December, winter holiday gatherings increased virus spread, and thus resulted in another peak;  most recently, the re-opening of schools in September, 2021 leads to continued transmissions. Consequently, we select  April 5, 2020, June 4, 2020, December 20, 2020 and September 19, 2021 as the starting dates of the planning horizon, where we consider a two-week planning horizon with one period being one week. As a result, we examine the presented approaches over (i) the first peak (i.e., April 5, 2020 -- April 19, 2020), (ii) off-peak periods where we can still observe the effects of lockdown procedures (i.e., June 4, 2020 -- June 18, 2020), (iii) the second peak (i.e., December 20, 2020 -- January 3, 2021) and (iv) continued transmission period (i.e., September 19, 2021 -- October 3, 2021).

In terms of parameter choices, we set the upper bound $M_i$ of the capacity $h_{it}$ to the maximum production amount of supplier $i\in \mathcal{I}$ at period $t \in \mathcal{T}$ depending on the size of each supplier, and the temporal capacity of manufacturing test kits $B_t$ to the sum of maximum capacity over all DCs for all $t\in \mathcal{T}$. We also test the case where there are only scarce resources available (usually in earlier phases of the pandemic), and set the capacity $B_t$ to 10\% of the regular capacities for all $t\in \mathcal{T}$. 
The operating cost $c^o_i$ for each supplier $i\in \mathcal{I}$ is similar to the one in the vaccine distribution case. We set the unit capacity cost $c^h_i$ to \$20 for all $i\in \mathcal I$, and the unit inventory cost $c^I_{jt}$ to the average pallet storage cost divided by the number of test kits a pallet can store for all $j\in \mathcal J,\ t\in \mathcal T$. The unit shipping costs $c^s_{ijt}$ for all region $j\in \mathcal J$ and supplier $i\in \mathcal I$ are calculated as \$3 per mile times the distance in miles divided by the number of test kits a truck can carry. In the state-level test kit allocation problem, we examine three cases of the unit penalty $c_{jt}^u$ for unmet demand such that we can prioritize population groups or regions during different time periods. 
In particular, in Case (i), we set $c_{jt}^u$ to a constant, i.e., $c_{jt}^u = 100$ for all $j$ and $t$; in Case (ii), we let $c_{jt}^u$ be linearly dependent on the projected demand median for region $j$ at period $t$ and set $c_{jt}^u = d^{\text{median}}_{jt} + 10$, indicating that we add a small constant to the median as some regions' demand medians are 0 in early phases and also penalize demand loss; and in Case (iii), we  set $c_{jt}^u = 0.001 d^{\text{elder}}_{j}$ with $d^{\text{elder}}_{jt}$ being the number of people above 65 years old in region $j$. In the latter two cases, we prioritize regions with more infections and more elderly or high-risk population groups, respectively. 

To compare the severity of disease transmission in different regions and phases, we display the weekly projected demand median $d^{\text{median}}_{jt}$ for 8 regions in Michigan during the four phases as well as 65 or older populations in each region in Table \ref{tab:median}.
\begin{table}[ht!]
  \centering
  \caption{Projected demand median during different phases and population above 65 for 8 regions}
  \resizebox{\textwidth}{!}{
    \begin{tabular}{crrrrrrrr}
    \hline
          {Regions}& \multicolumn{1}{l}{Region 1} & \multicolumn{1}{l}{Region 2} & \multicolumn{1}{l}{Region 3} & \multicolumn{1}{l}{Region 4} & \multicolumn{1}{l}{Region 5} & \multicolumn{1}{l}{Region 6} & \multicolumn{1}{l}{Region 7} & \multicolumn{1}{l}{Region 8} \\
          \hline
    {Demand in Apr. 2020} & 153   & 3.6K  & 2.2K  & 107   & 2   & 407   & 31    & 0 \\
    {Demand in June 2020} & 43   & 236  & 516  & 61   & 112   & 443   & 0    & 0 \\
    {Demand in Dec. 2020} & 55.5K & 100.0K & 93.3K & 72.7K & 52.1K & 76.5K & 18.8K & 12.7K\\
    {Demand in Sept. 2021} & 48.5K & 76.0K & 72.2K & 51.0K & 56.4K & 83.3K & 14.3K & 15.8K\\
    \hline
    \hline
    Population above 65 & 181.8K & 360.2K & 349.2K & 216.9K & 143.5K & 237.2K & 84.2K & 66.5K\\
    \hline
    \end{tabular}%
    }
  \label{tab:median}%
\end{table}%
As can be seen from Table \ref{tab:median}, Regions 7 and 8 have significantly fewer number of new cases each day and senior populations than other regions, while Regions 2 and 3 have the highest projected demand and senior population and thus will be most prioritized when we follow the penalty cost cases (ii) and (iii). 

\subsubsection{Results}

We first obtain optimal solutions of the three models (DT, SP, DRO) based on 100 in-sample scenarios and 1000 out-of-sample scenarios following the same normal distribution with the estimated mean and variance. For DRO, we set parameters in the ambiguity set as $\epsilon^{\mu}_{jt}=0.5\mu_{jt},\ \underline{\epsilon}^{S}_{jt}=0.1,\ \bar{\epsilon}^{S}_{jt}=2$ for all $j\in \mathcal{J},\ t\in\mathcal{T}$ in June 2020, December 2020 and September 2021 and set $\epsilon^{\mu}_{jt}=\mu_{jt},\ \underline{\epsilon}^{S}_{jt}=0.01,\ \bar{\epsilon}^{S}_{jt}=10$ for all $j\in \mathcal{J},\ t\in\mathcal{T}$ in April 2020. 
Letting the penalty parameter $c^u_{jt} = 0.001d^{\text{elder}}_{j}$ for each region $j \in \mathcal{J}$ (i.e., Case (iii) penalty setting), we summarize out-of-sample performance given by solutions of the three approaches under different resource settings in Table \ref{tab:testing_summary}, where we mark the percentage increase compared with the optimal one in parentheses. The last column in Table \ref{tab:testing_summary} records the in-sample computational time in seconds. The raw cost breakdowns of each approach are presented in Tables \ref{tab:testkitCosts-ample} and \ref{tab:testkitCosts-scarce} in the Appendix. 

\begin{table}[ht!]
  \centering
  \caption{Out-of-sample performance of different approaches in the COVID-19 test kit distribution example}
  \resizebox{\textwidth}{!}{
    \begin{tabular}{cc|rr|rr|r}
    \hline
    \multirow{2}[0]{*}{Phases} & \multirow{2}[0]{*}{Approach} & \multicolumn{2}{c|}{Unsatisfied Demand} & \multicolumn{2}{c|}{Overall Cost} & \multirow{2}[0]{*}{Time (sec.)}  \\
          &       & \multicolumn{1}{r}{{Full Resources}} & \multicolumn{1}{r|}{{Scarce Resources}} & \multicolumn{1}{r}{{Full Resources}} & \multicolumn{1}{r|}{{Scarce Resources}} & \\
          \hline
    \multirow{3}[0]{*}{Apr. 2020} & DT & {76K ($+5.86\times 10^5\%$)}     & {76K ($+3.89\times 10^3\%$)}    & {\$29M (+68.27\%)}     & {\$29M (+68.27\%)} & 0.1 \\
          & SP    & {3K ($+2.00\times 10^4\%$)}     & {3K ($+36\%$)}     & {\$17M}     & {\$17M} & 9.5 \\
          & DRO   & {13}     & {2K}     & {\$20M (+17.02\%)}     & {\$17M (+1.04\%)} & 12.1 \\
          \hline
    \multirow{3}[0]{*}{Jun. 2020} & DT & {5K ($+4.24\times 10^4\%$)}     &  {5K ($+4.24\times 10^4\%$)}    & {\$2M (+28.79\%)}    & {\$2M (+28.79\%)} & 0.1\\
          & SP    & {691 ($+5.78\times 10^3\%$)}     & {691 ($+5.78\times 10^3\%$)}    & {\$1M}     & {\$1M} & 9.4\\
          & DRO   & {12}     & {12}      & {\$2M (+18.69\%)}      & {\$2M (+18.69\%)} & 20.6\\
          \hline
    \multirow{3}[0]{*}{Dec. 2020} & DT & {40K ($+2.41\times 10^4\%$)}     & {150K}     & {\$22M (+9.75\%)}     & {\$34M} & 0.1\\
          & SP    & {6K ($+3.89\times 10^3\%$)}     & {150K}     & {\$20M}      & {\$34M ($+2.80\times 10^{-3}\%$)}  & 5.2\\
          & DRO   & {164}      & {150K}      & {\$22M (+10.49\%)}      & {\$34M ($+5.03\times 10^{-3}\%$)} & 12.0 \\
          \hline
        \multirow{3}[0]{*}{{Sept. 2021}} & DT & {39K}     & {55K}     & {\$21M (+9.44\%)}     & {\$22M ($+1.32\times 10^{-3}\%$)} & 0.1 \\
          & SP    & {10K}     & {55K}     & {\$19M}      & {\$22M } & 9.8 \\
          & DRO   & {0}      & {55K}      & {\$21M (+12.17\%)}      & {\$22M ($+9.17\times 10^{-3}\%$)} & 13.0 \\
          \hline      
    \end{tabular}%
    }
  \label{tab:testing_summary}%
\end{table}%

From Table \ref{tab:testing_summary}, in the full resource level, DRO always attains the least amount of unsatisfied demand, while DT has the most. Under scarce resources (i.e., 10\% of the full resource level), the results almost remain the same in April and June 2020, because most of the approaches do not use up 10\% of the total resources. However, when the resource capacity becomes tight such as in December 2020 and September 2021, the results of the three approaches are almost identical as there is not much flexibility in producing and shipping test kits. In terms of overall cost, SP outperforms the other two models in most of the settings. 

We display the detailed unsatisfied demand percentages in each region under different cases of penalty cost patterns in Figure \ref{fig:regional UD_testing}. Comparing Figure \ref{fig:regional UD_testing} with Table \ref{tab:median}, for Cases (ii) and (iii) penalty settings, the demand in Regions 2, 3, 4 and 6 is all satisfied, as these regions have more projected demand in December and more senior populations. On the other hand, Regions 7 and 8 always have the highest unsatisfied demand as they are farther away from the DCs and have less projected demand.

\begin{figure}[ht!]
    \centering
    \includegraphics[width=0.6\textwidth]{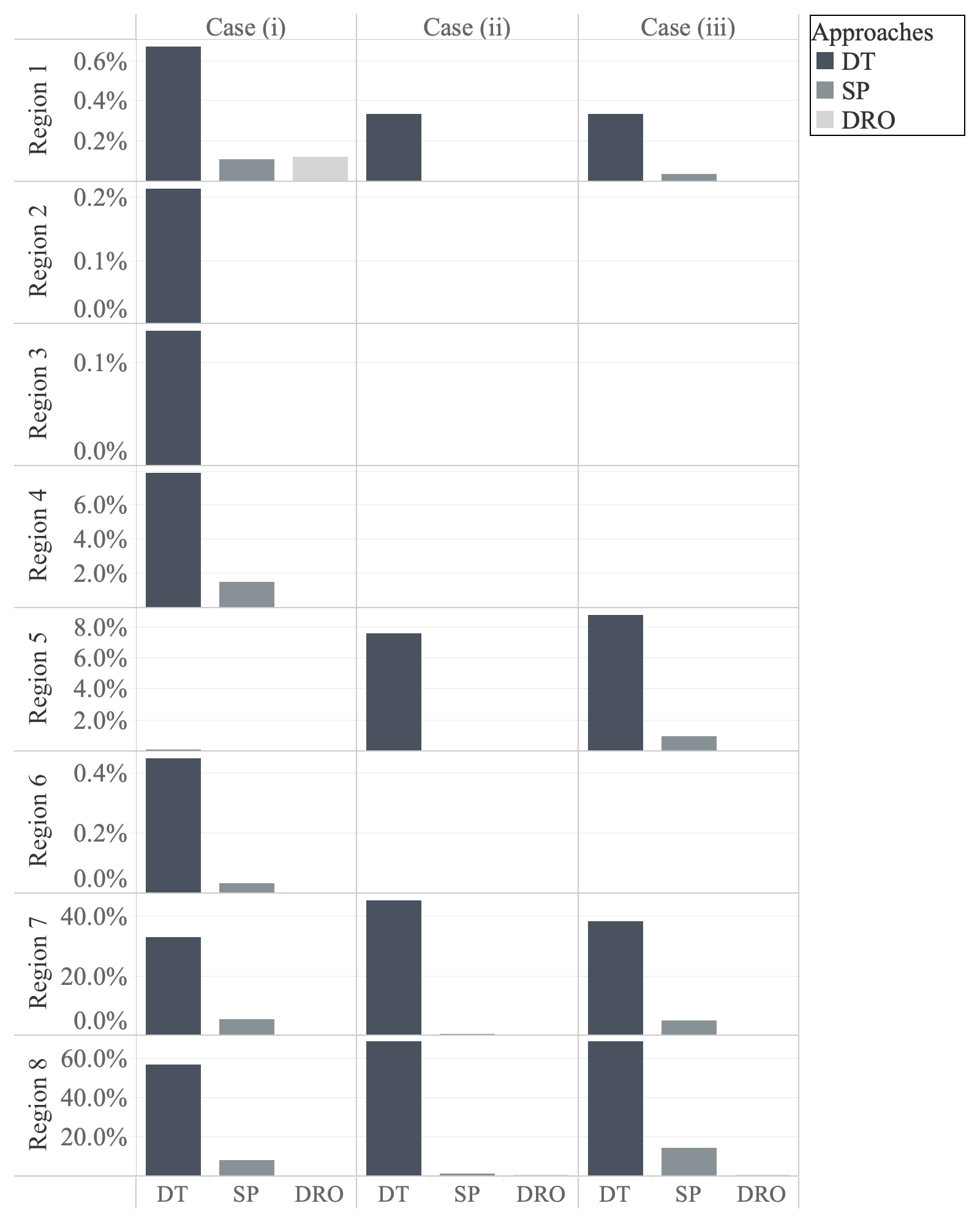}
    \caption{Unsatisfied demand percentages in each region in December with full resources given Cases (i), (ii), (iii) unit penalty.}
    \label{fig:regional UD_testing}
\end{figure}

Next, we present the cost breakdown over different phases in Figure \ref{fig:cost_testing}. From Figure \ref{fig:cost_testing}, under full resources, DRO can satisfy almost all the demand, while DT still leaves thousands of people untested. However, when we limit the resources to be 10\% of the regular amounts (see gray dashed line in Figure \ref{fig:cost_testing}), all capacity costs that exceed the gray line will be truncated to the one that uses only 10\% of the original $B_t$. 

\begin{figure}[ht!]
    \centering
    \includegraphics[width=\textwidth]{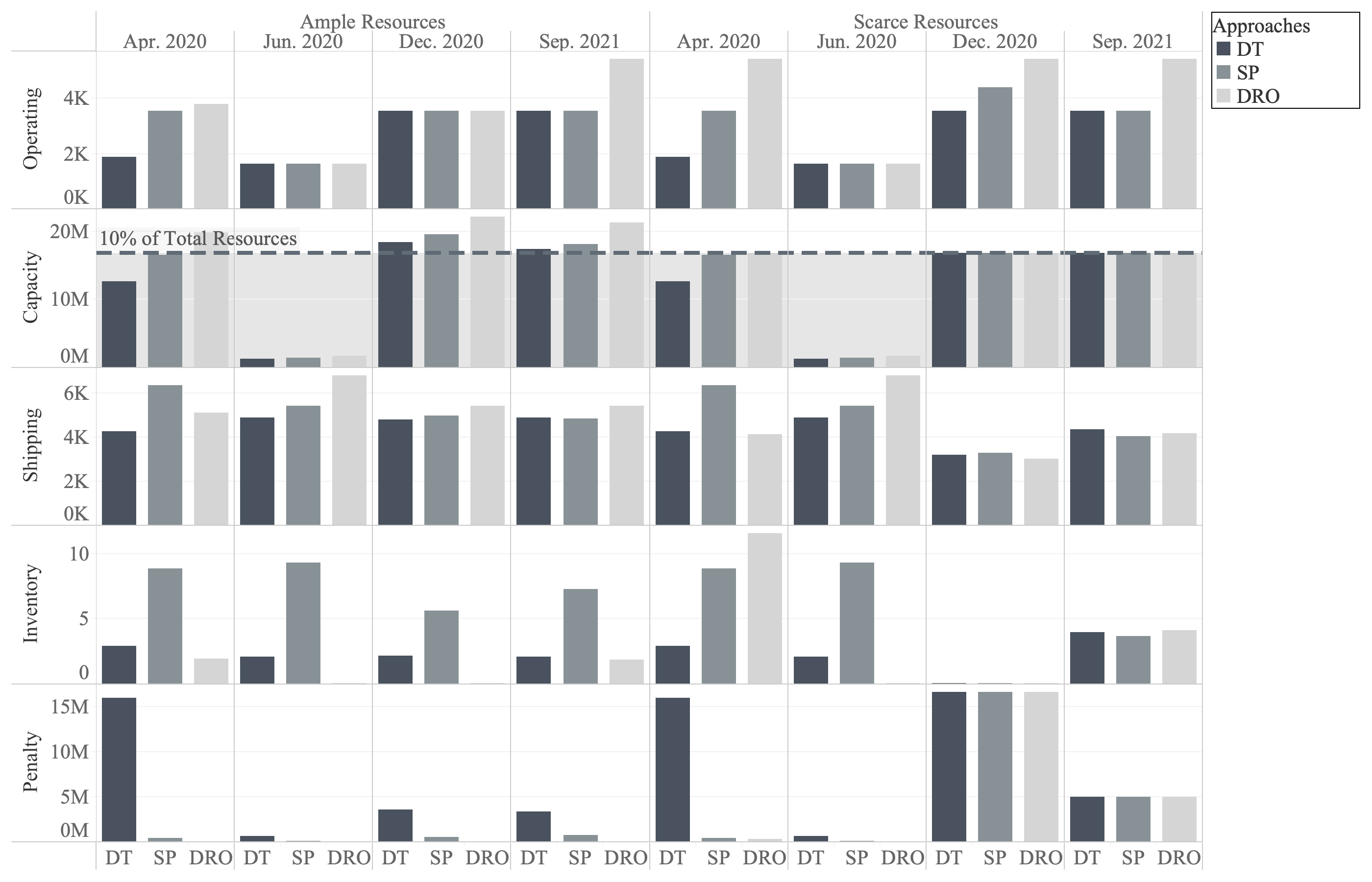}
    \caption{ Cost breakdown of test kit distribution over different phases under ample and scarce resources (in dollars).}
    \label{fig:cost_testing}
\end{figure}

\section{Conclusion}
\label{sec:concl}

In this paper, we presented a generic framework to model resource distribution for epidemic response under spatiotemporal demand uncertainties for these resources. Depending on the level of statistical information available for characterizing the uncertainties, which are impacted by various factors including infection trends and demographic behavior, we proposed a stochastic programming and a distributionally robust optimization approach to optimize the locations and capacities of DCs, together with shipping, inventory and demand loss variables. As the proposed optimization frameworks can be applied to different scales of resource distribution, we presented two case studies using instances of COVID-19 vaccine distribution in the US and the analogous test kit distribution in the State of Michigan. Furthermore, we considered  different phases of the pandemic, ample or scarce resources, and three cases of unmet demand penalty settings to compare the results of deterministic, stochastic and robust optimization approaches. 

Our approaches aim efficient and fair distribution of resources by allowing prioritization of regions with more vulnerable population groups or with higher infection susceptibility. The case studies demonstrated the importance of incorporating demand uncertainty in these planning problems as the stochastic programming and distributionally robust optimization approaches outperform the deterministic one in terms of cost and demand coverage. The distributionally robust approach provided a better most-restrictive performance in terms of unvaccinated or untested people who qualify, with an overall cost higher than the one of the stochastic programming approach.Our comparison of the proposed approaches with the CDC's current distribution status of COVID-19 vaccines shows that our approaches are demand-driven, satisfy more demand in the earlier phases and also prevent unnecessary production, shipping and inventory costs in the later phases of the pandemic when demand drops. Furthermore, the case study of test kit distribution demonstrated the prioritization of resource allocation depending on the infection trend and vulnerable population percentages over the studied regions. 

 As a limitation of the work, we did not incorporate the time difference of demand for dose 1 and dose 2 of the Pfizer-BioNTech or Moderna vaccines into the optimization models, but only consider the aggregated total demand. For future research, it will be interesting to consider different uncertainties for dose-1 and dose-2 demand, and also incorporate the uncertain time between taking two doses into the SP and DRO models. One can also build divergence-based ambiguity sets rather than using the moment information or consider continuous support case in the DRO approach.  

\section*{Acknowledgement} 
The authors gratefully acknowledge the partial support from U.S.\ National Science Foundation (NSF) grants \#CMMI-1727618, 2041745 and Department of Energy (DoE) grant \#DE-SC0018018. 


\section*{Appendix: Parameter Sources and More Results}
\label{sec:Appendix}
We first present the list of parameters and decision variables of the SMIP model \eqref{eq:TS-MILP}, and sources of parameters in vaccine distribution and test kit allocation problems in Tables \ref{tab:SetsDecVarsParameters} and \ref{tab:parameterSources}, respectively.

\begin{table}[ht!]
    \centering
    \caption{List of sets, parameters and decision variables.}
    \resizebox{\textwidth}{!}{
    \begin{tabular}{r|l}
    \hline
       Name  & Definition \\
       \hline
       \bf{Sets:} & \\
       $\mathcal{I}$ & Sets of potential sites for DCs\\
       $\mathcal{J}$ & Sets of demand sites \\
       $\mathcal{T}$ & Sets of time periods in the planning horizon \\
       \hline
       \bf{Parameters:} & \\
       $c^o_i$  &  Cost of operating DC at location $i$\\
       $c^h_{i}$ & Cost of installing unit capacity for DC at location $i$\\
       $c^s_{ijt}$ & Cost of shipping an unit from DC at location $i$ to demand site $j$ at period $t$ \\
       $c^I_{jt}$ & Penalty cost of unit unsatisfied demand at demand site $j$ at period $t$ \\
       $c^u_{jt}$ & Holding cost of an unit at demand site $j$ at period $t$\\
       $B_t$ & Total resource capacity at period $t$ \\
       $M_i$ & Capacity limit of DC at location $i$ \\
       $d_{jt}$ & Demand of demand site $j$ at period $t$ \\
       \hline
       \bf{Decision variables:} & \\
       $x_i$ & Binary variable denoting whether DC at location $i$ is open \\
       $h_{it}$ & Capacity of DC at location $i$ at period $t$\\
       $s_{ijt}$ & Amount of resources sent from DC at location $i$ to demand site $j$ at period $t$ \\
       $I_{jt}$ & Amount of inventory at demand site $j$ at period $t$ \\
       $u_{jt}$ & Unsatisfied demand amount at demand site $j$ at period $t$ \\
       \hline
    \end{tabular}
    }
    \label{tab:SetsDecVarsParameters}
\end{table}

\begin{table}[ht!]
    \centering
    \caption{Parameter sources in vaccine distribution and test kit allocation}
    \resizebox{\textwidth}{!}{
    \begin{tabular}{r|ll}
    \hline
       Parameter  & Vaccine Distribution & Test Kit Allocation \\
       \hline
       $c^o_i$  & {\href{https://www.loopnet.com/michigan/kentwood_warehouses-for-lease/?sort=1:1}{\underline{Warehouse rental price}}}&  {\href{https://www.loopnet.com/michigan/kentwood_warehouses-for-lease/?sort=1:1}{\underline{Warehouse rental price}}}\\
       $c^h_{i}$ & \href{https://www.biospace.com/article/comparing-covid-19-vaccines-pfizer-biontech-moderna-astrazeneca-oxford-j-and-j-russia-s-sputnik-v/}{\underline{COVID-19 vaccine cost}} &\href{https://coronavirus.jhu.edu/from-our-experts/q-and-a-how-much-does-it-cost-to-get-a-covid-19-test-it-depends}{\underline{COVID-19 test kit cost}}\\
       $c^s_{ijt}$ & \href{https://www.freightwaves.com/news/understanding-total-operating-cost-per-mile}{\underline{Vans travel cost}} + \href{https://www.modernatx.com/covid19vaccine-eua/providers/storage-handling}{\underline{Vaccine dimensions}} +
       \href{https://path.azureedge.net/media/documents/TS_opt_in_country_transport_rpt.pdf}{\underline{Vaccine deliver cost}} &\href{https://www.freightwaves.com/news/understanding-total-operating-cost-per-mile}{\underline{Vans travel cost}} +
       \href{https://www.vanwisegroup.com/news/top-5-vans-with-the-largest-capacity/}{\underline{Vans capacity}}\\
       $c^I_{jt}$ & \href{https://www.warehousingcompanies.net/resources/prices-for-storing-product-in-a-warehouse/}{\underline{Warehouse storage cost}} + \href{https://americanbiotechsupply.com/refrigerators/products/pharmacy/premier/36-cu.-ft.-pharmacy-glass-door-refrigerator}{\underline{Energy cost}} &\href{https://www.warehousingcompanies.net/resources/prices-for-storing-product-in-a-warehouse/}{\underline{Warehouse storage cost}} +
       \href{https://www.copanusa.com/covid-19-sample-collection-kits-for-upper-respiratory-tract-specimens/}{\underline{COVID-19 test kit dimension}}\\
       $c^u_{jt}$ & 100 & Three penalty settings as discussed in the paper\\
       \hline
    \end{tabular}
    }
    \label{tab:parameterSources}
\end{table}

Next, we present the detailed results of cost breakdown in the national vaccine allocation and state-level test kit distribution problems under different phases, resource settings and approaches.

\begin{table}[ht!]
  \centering
  \caption{Cost breakdown of vaccine distribution under full resources, different phases and approaches}
    \begin{tabular}{lrrrrrrr}
    \hline
    Phases & Approaches & \multicolumn{1}{r}{Operating} & \multicolumn{1}{r}{Capacity} & \multicolumn{1}{r}{Shipping} & \multicolumn{1}{r}{Inventory} & \multicolumn{1}{r}{Penalty} & \multicolumn{1}{r}{Overall Cost} \\
    \hline
    \multirow{3}[0]{*}{Phase 1} & DT    & \multicolumn{1}{r}{\$97K} & \multicolumn{1}{r}{\$486M} & \multicolumn{1}{r}{\$39K} & \multicolumn{1}{r}{\$34} & \multicolumn{1}{r}{\$37M} & \multicolumn{1}{r}{\$524M} \\
          & SP    & \multicolumn{1}{r}{\$63K} & \multicolumn{1}{r}{\$496M} & \multicolumn{1}{r}{\$48K} & \multicolumn{1}{r}{\$155} & \multicolumn{1}{r}{\$6M} & \multicolumn{1}{r}{\$503M} \\
          & DRO   & \multicolumn{1}{r}{\$76K} & \multicolumn{1}{r}{\$515M} & \multicolumn{1}{r}{\$55K} & \multicolumn{1}{r}{\$80} & \multicolumn{1}{r}{\$292K} & \multicolumn{1}{r}{\$516M} \\
          \hline
    \multirow{3}[0]{*}{Phase 2} & DT    & \multicolumn{1}{r}{\$194K} & \multicolumn{1}{r}{\$3B} & \multicolumn{1}{r}{\$145K} & \multicolumn{1}{r}{\$259} & \multicolumn{1}{r}{\$232M} & \multicolumn{1}{r}{\$4B} \\
          & SP    & \multicolumn{1}{r}{\$175K} & \multicolumn{1}{r}{\$3B} & \multicolumn{1}{r}{\$125K} & \multicolumn{1}{r}{\$752} & \multicolumn{1}{r}{\$17M} & \multicolumn{1}{r}{\$3B} \\
          & DRO   & {\$107K} & {\$3B} & {\$545K} & {\$519} & {\$9M} & {\$3B} \\
          \hline
    \multirow{3}[0]{*}{Phase 3} & DT    & \multicolumn{1}{r}{\$290K} & \multicolumn{1}{r}{\$4B} & \multicolumn{1}{r}{\$180K} & \multicolumn{1}{r}{\$569} & \multicolumn{1}{r}{\$262M} & \multicolumn{1}{r}{\$4B} \\
          & SP    & \multicolumn{1}{r}{\$263K} & \multicolumn{1}{r}{\$4B} & \multicolumn{1}{r}{\$152K} & \multicolumn{1}{r}{\$2K} & \multicolumn{1}{r}{\$31M} & \multicolumn{1}{r}{\$4B} \\
          & DRO   & {\$259K} & {\$4B} & {\$411K} &{\$935} & {\$45M} & {\$4B} \\
          \hline
    \end{tabular}%
  \label{tab:vaccineCosts-ample}%
\end{table}%

\begin{table}[ht!]
  \centering
  \caption{Cost breakdown of vaccine distribution under scarce resources, different phases and approaches}
    \begin{tabular}{lrrrrrrr}
    \hline
    Phases & Approaches & \multicolumn{1}{r}{Operating} & \multicolumn{1}{r}{Capacity} & \multicolumn{1}{r}{Shipping} & \multicolumn{1}{r}{Inventory} & \multicolumn{1}{r}{Penalty} & \multicolumn{1}{r}{Overall Cost} \\
    \hline
    \multirow{3}[0]{*}{Phase 1} & DT    & \multicolumn{1}{r}{\$97K} & \multicolumn{1}{r}{\$486M} & \multicolumn{1}{r}{\$39K} & \multicolumn{1}{r}{\$34} & \multicolumn{1}{r}{\$37M} & \multicolumn{1}{r}{\$524M} \\
          & SP    & \multicolumn{1}{r}{\$63K} & \multicolumn{1}{r}{\$496M} & \multicolumn{1}{r}{\$52K} & \multicolumn{1}{r}{\$99} & \multicolumn{1}{r}{\$7M} & \multicolumn{1}{r}{\$503M} \\
          & DRO   & \multicolumn{1}{r}{\$54K} & \multicolumn{1}{r}{\$512M} & \multicolumn{1}{r}{\$58K} & \multicolumn{1}{r}{\$64} & \multicolumn{1}{r}{\$687K} & \multicolumn{1}{r}{\$513M} \\
          \hline
    \multirow{3}[0]{*}{Phase 2} & DT    & \multicolumn{1}{r}{\$105K} & \multicolumn{1}{r}{\$2B} & \multicolumn{1}{r}{\$48K} & \multicolumn{1}{r}{\$0} & \multicolumn{1}{r}{\$17B} & \multicolumn{1}{r}{\$19B} \\
          & SP    & \multicolumn{1}{r}{\$96K} & \multicolumn{1}{r}{\$2B} & \multicolumn{1}{r}{\$42K} & \multicolumn{1}{r}{\$0} & \multicolumn{1}{r}{\$17B} & \multicolumn{1}{r}{\$19B} \\
          & DRO   & {\$105K} & {\$2B} & {\$41K} & {\$0} & {\$17B} & {\$19B} \\
          \hline
    \multirow{3}[0]{*}{Phase 3} & DT    & \multicolumn{1}{r}{\$256K} & \multicolumn{1}{r}{\$3B} & \multicolumn{1}{r}{\$122K} & \multicolumn{1}{r}{\$0} & \multicolumn{1}{r}{\$13B} & \multicolumn{1}{r}{\$16B} \\
          & SP    & \multicolumn{1}{r}{\$175K} & \multicolumn{1}{r}{\$3B} & \multicolumn{1}{r}{\$134K} & \multicolumn{1}{r}{\$0} & \multicolumn{1}{r}{\$13B} & \multicolumn{1}{r}{\$16B} \\
          & DRO   & {\$209K} & {\$3B} & {\$221K} & {\$0} & {\$13B} & {\$16B} \\
          \hline
    \end{tabular}%
  \label{tab:vaccineCosts-scarce}%
\end{table}%

\begin{table}[ht!]
  \centering
  \caption{Cost breakdown of test kit distribution under full resources, different phases and approaches}
    \begin{tabular}{lrrrrrrr}
    \hline
    Phases & \multicolumn{1}{c}{Approaches} & Operating & Capacity & Shipping & Inventory & Penalty & Overall Cost \\
    \hline
    \multirow{3}[0]{*}{Apr. 2020} & DT    &  \$2K  &  \$13M  &  \$4K  &  \$3  &  \$16M  & \$29M \\
          & SP    & \$4K  & \$17M & \$6K  & \$9  & \$395K  & \$17M \\
          & DRO   & \$4K  & \$20M & \$5K  & \$2   & \$2K  & \$20M \\
          \hline
    \multirow{3}[0]{*}{Jun. 2020} & DT    & \$2K  & \$1M  & \$5K  & \$2   & \$638K & \$2M \\
          & SP    & \$2K  & \$1M  & \$5K  & \$9  & \$76K & \$1M \\
          & DRO   & \$2K  & \$2M  & \$7K  & \$0   & \$1K  & \$2M \\
          \hline
    \multirow{3}[0]{*}{Dec. 2020} & DT    & \$4K  & \$18M & \$5K  & \$2   & \$4M & \$22M \\
          & SP    & \$4K  & \$20M & \$5K  & \$6   & \$535K & \$20M \\
          & DRO   & \$4K  & \$22M & \$5K  & \$0   & \$12K & \$22M \\
          \hline
    \multirow{3}[0]{*}{{Sept. 2021}} & DT    & \$4K  & \$17M & \$5K  & \$2   & \$3M & \$21M \\
          & SP    & \$4K  & \$18M & \$5K  & \$7   & \$811K & \$19M \\
          & DRO   & \$5K  & \$21M & \$5K  & \$2   & \$0 & \$21M \\
          \hline
    \end{tabular}%
  \label{tab:testkitCosts-ample}%
\end{table}%

\begin{table}[ht!]
  \centering
  \caption{Cost breakdown of test kit distribution under scarce resources, different phases and approaches}
    \begin{tabular}{lrrrrrrr}
    \hline
    Phases & \multicolumn{1}{c}{Approaches} & Operating & Capacity & Shipping & Inventory & Penalty & Overall Cost \\
    \hline
    \multirow{3}[0]{*}{Apr. 2020} & DT    &  \$2K  &  \$13M  &  \$4K  &  \$3  &  \$16M  & \$29M \\
          & SP    & \$4K  & \$17M & \$6K  & \$9  & \$395K  & \$17M \\
          & DRO   & \$5k  & \$17M & \$4K  & \$12   & \$282K & \$17M \\
          \hline
    \multirow{3}[0]{*}{Jun. 2020} & DT    & \$2K  & \$1M  & \$5K  & \$2   & \$638K & \$2M \\
          & SP    & \$2K  & \$1M  & \$5K  & \$9  & \$76K & \$1M \\
          & DRO   & \$2K  & \$2M  & \$7K  & \$0   & \$1K  & \$2M \\
          \hline
    \multirow{3}[0]{*}{Dec. 2020} & DT    & \$4K  & \$17M & \$3K  & \$0   & \$17M & \$34M \\
          & SP    & \$4K  & \$17M & \$3K  & \$0   & \$17M & \$34M \\
          & DRO   & \$5K  & \$17M & \$3K  & \$0   & \$17M & \$34M \\
          \hline
    \multirow{3}[0]{*}{{Sept. 2021}} & DT    & \$4K  & \$17M & \$4K  & \$4   & \$5M & \$22M \\
          & SP    & \$4K  & \$17M & \$4K  & \$4   & \$5M & \$22M \\
          & DRO   & \$5K  & \$17M & \$4K  & \$4   & \$5M & \$22M \\
          \hline
    \end{tabular}%
  \label{tab:testkitCosts-scarce}%
\end{table}%

\end{document}